\newcommand{\udots}{\mathinner{\mskip1mu\raise1pt\vbox{\kern7pt\hbox{.}}
\mskip2mu\raise4pt\hbox{.}\mskip2mu\raise7pt\hbox{.}\mskip1mu}}
\newcommand{\SC}{{\mathcal{C}}}
\newcommand{\SD}{{\mathcal{D}}}
\newcommand{\SE}{{\mathcal{E}}}
\newcommand{\SO}{{\mathcal{O}}}
\newcommand{\vicM}{{\mathcal{M}}}
\newcommand{\PP}{\mathbb{P}}
\newcommand{\CC}{\mathbb{C}}
\newcommand{\VV}{\mathbb{V}}
\newcommand{\Gr}{\operatorname{Gr}}
\newcommand{\Spec}{\operatorname{Spec}}
\newcommand{\codim}{\operatorname{codim}}
\newcommand{\im}{\operatorname{im}}
\newcommand{\surj}{\twoheadrightarrow}
\newcommand{\too}{\longrightarrow}
\newcommand{\rk}{\operatorname{rk}}
\newcommand{\End}{\operatorname{End}}
\newcommand{\wt}{\widetilde}
\newcommand{\tr}{\operatorname{tr}}
\newcommand{\GL}{\operatorname{GL}}
\newcommand{\Sp}{\operatorname{Sp}}
\newcommand{\Gp}{\operatorname{Gp}}
\newtheorem{proposition}{Proposition}[section]
\newtheorem{theorem}[proposition]{Theorem}
\newtheorem{lemma}[proposition]{Lemma}
\newtheorem{remark}[proposition]{Remark}
\numberwithin{equation}{section}
\begin{document}

\title[Automorphisms of moduli spaces]{Automorphisms of moduli
spaces of symplectic bundles}

\author[I. Biswas]{Indranil Biswas}
 \address{School of Mathematics, Tata Institute of Fundamental
 Research, Homi Bhabha Road, Bombay 400005, India}
\email{indranil@math.tifr.res.in}

\author[T. L. G\'omez]{Tom\'as L. G\'omez}
\address{Instituto de Ciencias Matem\'aticas (CSIC-UAM-UC3M-UCM),
Serrano 113bis, 28006 Madrid, Spain; and
Facultad de Ciencias Matem\'aticas,
Universidad Complutense de Madrid, 28040 Madrid, Spain. }
\email{tomas.gomez@icmat.es}

\author[V. Mu\~{n}oz]{Vicente Mu\~{n}oz}
\address{Facultad de Ciencias Matem\'aticas,
 Universidad Complutense de Madrid, 28040 Madrid, Spain}
\email{vicente.munoz@mat.ucm.es}

\subjclass[2000]{14H60}

\date{}

\begin{abstract}
Let $X$ be an irreducible smooth complex projective curve
of genus $g\geq 3$. Fix a line bundle $L$ on $X$.
Let $M_{\rm Sp}(L)$ be the moduli space
of symplectic bundles $(E,\varphi:E\otimes E\to L)$ on $X$, with
the symplectic form taking values in $L$. We show that the
automorphism group of $M_{\rm Sp}(L)$ is generated by automorphisms
of the form $E\longmapsto E\otimes M$,
where $M^2\cong \SO_X$, and automorphisms induced by
automorphisms of $X$.
\end{abstract}

\maketitle

\section{Introduction}

Let $X$ be a smooth complex projective curve of genus $g$,
with $g\, \geq \, 3$.
A set of generators of the automorphism group of the moduli space of
semistable vector bundles over $X$ of rank $r$ with fixed determinant $L$
was obtained by Kouvidakis and Pantev in \cite{KP}. More precisely,
they proved that the automorphism group
is generated by the automorphisms of $X$,
automorphisms of the form $E\longmapsto E\otimes M$,
where $M$ is a line bundle with $M^{\otimes r}\cong \SO_X$, and,
if $r$ divides $2\deg L$, automorphisms of
the form $E\longmapsto E^\vee \otimes N$,
where $N$ is a line bundle with $N^{\otimes r}\cong L^{\otimes 2}$.
In the same paper they prove a Torelli theorem for these
moduli spaces. The proofs of their results crucially use
the Hitchin map defined
on the moduli of Higgs bundles.
In \cite{HR}, Hwang and Ramanan gave a different proof of the above
results using Hecke curves, which are minimal rational curves constructed
using Hecke transformations.

Fix a holomorphic line bundle $L$ on $X$, and consider the
moduli space $M_{\rm Sp}(L)$
of stable symplectic bundles $(E,\varphi:E\otimes E\too L)$ of rank $2n$ and with values
in $L$. Take a line bundle $M$ on $X$ with $M^{\otimes 2}\cong \SO_X$. Fix an
isomorphism $\beta : M^{\otimes 2} \longrightarrow \SO_X$. Then we have an
automorphism of $M_{\rm Sp}(L)$ defined by
$(E, \varphi) \longmapsto (E\otimes M, \varphi\otimes \beta)$.

More generally, let $\sigma:X\too X$ be an automorphism, and let $M$
be a line bundle on $X$
such that $M^{\otimes 2} \cong L\otimes (\sigma^*L)^\vee$. Fix
an isomorphism $\beta$ as above.
Then $(E, \varphi) \longmapsto (M\otimes \sigma^*E, \beta \otimes\sigma^*\varphi)$
is an automorphism of $M_{\rm Sp}(L)$. We remark that, in both cases,
the automorphism does not depend on the choice of $\beta$.

In Theorem \ref{thm:autom} we show that these are all the
automorphisms of $M_{\rm Sp}(L)$. More precisely,
the automorphism group $\text{Aut}(M_{\rm Sp}(L))$ fits in a short
exact sequence of groups
$$
e\too J(X)_2 \too \text{Aut}(M_{\rm Sp}(L)) \too \text{Aut}(X)
\too e\, ,
$$
where $J(X)_2$ is the group of line bundles on $X$ of order two
(see Proposition \ref{prop1}). Since the above
mentioned automorphisms of $M_{\rm Sp}(L)$ extend to the moduli
space of semistable symplectic bundles, it follows that
$\text{Aut}(M_{\rm Sp}(L))$ coincides with the
automorphism group of the moduli
space of semistable symplectic bundles (see Lemma \ref{lem-s-a}).

We also prove a Torelli type theorem for this moduli space
(Theorem \ref{thm:torelli}). This was proved earlier in \cite{BH}
by a different method.

Let us comment on our method of proof. The computation in \cite{KP}
of the automorphism group of the moduli space of vector bundles
uses a delicate argument in which the fibers of the Hitchin map
are studied over singular curves with non-generic singularities. Such
argument is not easy to generalize to other groups (like the symplectic
group). The proof of \cite{HR} is simpler in spirit: it determines
geometrically the Hitchin discriminant (the locus of singular
spectral curves), and then uses the theory of minimal rational
curves to prove that the dual variety of this Hitchin discriminant
is a locus of Hecke transforms. Neither of the theory of minimal
rational curves nor the constructions of Hecke transforms can be
generalized to other groups to an extent that cover the arguments.

We were lead to take the proof of \cite{HR}
for the automorphism group of the moduli space of vector bundles
and simplify it by removing the use of the dual varieties. Actually,
we found that the
Hitchin discriminant is enough to recover the nilpotent cones, and
from this to get the automorphism group.
Therefore, the proof given in this paper is the extension to the case
of the moduli space of \emph{symplectic} vector bundles of a proof
for the moduli space of vector bundles which is not in the
literature, and which simplifies both \cite{KP} and \cite{HR}.

\section{Moduli space of symplectic bundles}

Let
$$
J\,=\,
\begin{pmatrix}
0_{n\times n} & I_{n\times n}\\
-I_{n\times n} & 0_{n\times n}
\end{pmatrix}
$$
be the standard symplectic form on $\CC^{2n}$.
Define the group
\begin{equation}\label{eq.-c}
\Gp(2n,\CC)=\,\big\{ A\in \GL(2n,\CC)\,:\, A^t J A= c J \;
\text{for some}\; c\in \CC^*\big \}\, .
\end{equation}
It is an extension of $\CC^*$ by the symplectic group
$\Sp(2n,\CC)$
$$
e\too\Sp(2n,\CC) \too\Gp(2n,\CC) \stackrel{q}{\too}\CC^*\too e\, ,
$$
where $q(A)=c$ \ for any $A$ and $c$ as in \eqref{eq.-c}. From the
definition of the homomorphism $q$ it follows immediately that
for all $A\, \in\, \Gp(2n,\CC)$,
\begin{equation}\label{det.-id.}
\det A\, =\, q(A)^n\, .
\end{equation}

Let $X$ be an irreducible smooth complex projective curve of
genus $g$, with $g\geq 3$.
A symplectic bundle on $X$ of rank $2n$ with values in a
holomorphic line bundle $L$ is a pair
$(E,\varphi)$, where
$E$ is a holomorphic vector bundle of rank $2n$ and
 $$
 \varphi\,:\,E\bigwedge E\,\too\, L
 $$
is a homomorphism of coherent sheaves which is fiberwise nondegenerate.
The line bundle $\det(E)$
is canonically a direct summand of $(E\bigwedge E)^{\otimes n}$,
and the composition
$$
\det(E)\, \hookrightarrow\, (E\bigwedge E)^{\otimes n}\,
\stackrel{\varphi^{\otimes n}}{\longrightarrow}\, L^{\otimes n}
$$
is an isomorphism.
Giving a symplectic bundle is equivalent to giving a principal
$\Gp(2n,\CC)$-bundle.

Let $(E,\varphi)$ be a symplectic bundle. A holomorphic
subbundle $F$ of $E$ is called \textit{isotropic} if
$\varphi(F\bigwedge F)\,=\, 0$.

A symplectic bundle $(E,\varphi)$ is called \textit{stable}
(respectively, \textit{semistable}) if, for
all isotropic proper subbundles $E'\subset E$ of positive rank,
$$
\frac{\deg E'}{\rk E'} < \frac{\deg E}{\rk E} \quad
\text{(respectively, $\frac{\deg E'}{\rk E'}\leq \frac{\deg E}{\rk E}$)}
$$
See \cite{BG} for more on symplectic bundles.

We denote by $M_{\Sp}(L)$ the moduli space of stable symplectic bundles
with values in a fixed line bundle $L$.

\begin{lemma} \label{lem:asdfg1}
Assume that $\deg L \,\leq\, 2(g-1)$. Then
$H^0(E)=0$ for
a general stable bundle $(E,\varphi)\,\in\, M_{\Sp}(L)$.
\end{lemma}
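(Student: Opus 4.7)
My plan is as follows. By upper semicontinuity of $h^0$ in flat families, the condition $H^0(E)=0$ is open on the (irreducible) moduli space of semistable symplectic bundles with values in $L$, in which $M_{\Sp}(L)$ is a dense open subset. Hence it suffices to exhibit \emph{one} semistable symplectic bundle $(E,\varphi)$ with $H^0(E)=0$: this point will lie in the open locus $\{H^0=0\}$, which is then dense in the whole semistable moduli, and its intersection with the dense open stable locus $M_{\Sp}(L)$ is a dense open subset of $M_{\Sp}(L)$.

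To construct such an $(E,\varphi)$, I would choose an integer $d$ in the range $n(\deg L - g + 1) \leq d \leq n(g-1)$; the hypothesis $\deg L \leq 2(g-1)$ is exactly what makes this range non-empty. These bounds are equivalent to $\chi(F) \leq 0$ and $\chi(F^\vee \otimes L) \leq 0$ for a rank-$n$ bundle $F$ of degree $d$, so by the Brill--Noether theorem for stable vector bundles a general stable such $F$ satisfies $h^0(F) = h^0(F^\vee \otimes L) = 0$. I would then take $E$ to be a symplectic extension $0 \too F \too E \too F^\vee \otimes L \too 0$ with $F$ Lagrangian; such extensions form an affine space modelled on the symmetric part $H^1(X, \Sym^2 F \otimes L^\vee)$ of $\Ext^1(F^\vee \otimes L, F) = H^1(F\otimes F \otimes L^\vee)$. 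The long exact cohomology sequence immediately gives $H^0(E) = 0$ for every such extension class.

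The main obstacle is ensuring that the constructed $(E,\varphi)$ actually lies in the moduli space $M_{\Sp}(L)$ of stable bundles. When $\deg L < 2(g-1)$ one can choose $d$ strictly less than $n\deg L/2$, so that $\mu(F) < \mu(E) = \tfrac{1}{2}\deg L$, and a standard parameter-count in the space of symplectic extensions rules out isotropic destabilizing subbundles for a generic extension class. In the sharp boundary case $\deg L = 2(g-1)$ the forced choice $d = n(g-1)$ gives $\mu(F) = \mu(E)$, so the constructed $E$ is only strictly semistable; but by the opening paragraph this is still enough, since $(E,\varphi)$ provides a point of the semistable moduli at which $H^0$ vanishes, and density of the stable locus transports the vanishing to a dense open subset of $M_{\Sp}(L)$.
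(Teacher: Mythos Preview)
Your strategy is sound and genuinely different from the paper's. The paper argues by a dimension count on the \emph{bad} locus: given a stable $(E,\varphi)$ with a nonzero section, it forms the line subbundle $M=\SO_X(D)\hookrightarrow E$ generated by the section, its symplectic orthogonal $K\subset E$, and the induced rank-$(2n-2)$ symplectic bundle $(F,\omega_F)$ with $F=K/M$; after checking several $\Hom$-vanishings to ensure that $(M,F,\omega_F)$ together with a single extension class in $\Ext^1(Q,M)$ reconstructs $(E,\varphi)$, a Riemann--Roch computation shows this parameter family has dimension strictly below $n(2n+1)(g-1)$ exactly when $\deg L\le 2(g-1)$. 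So the paper bounds the codimension of $\{h^0>0\}$ directly, with no appeal to Brill--Noether theory or to stability of extensions.

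Your construction-plus-semicontinuity route is shorter, but two steps deserve more than the one-line treatment you give them. First, the claim that a generic symplectic extension of $F^\vee\otimes L$ by $F$ with $\mu(F)<\mu(E)$ is symplectically stable is correct but is not an automatic ``standard parameter-count'': one must actually bound, for each potential isotropic destabiliser $G\not\subset F$, the locus of classes in $H^1(\Sym^2 F\otimes L^\vee)$ admitting such a $G$, and this is a genuine (if routine) argument. Second, in the boundary case your invocation of ``a point of the semistable moduli at which $H^0$ vanishes'' is imprecise, since points of $\overline{M}_{\Sp}(L)$ are S-equivalence classes and $h^0$ is not a priori a function there; the clean fix is to run semicontinuity in a versal family of your strictly semistable $(E,\varphi)$ (or on a smooth atlas of the moduli stack), where openness of stability then yields a nearby stable bundle with $H^0=0$. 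With these points tightened your proof goes through; its advantage is conceptual economy, while the paper's argument is self-contained and actually estimates the codimension of the bad locus.
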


\begin{proof}
Using Riemann--Roch, $\dim M_{\Sp}(L)\,=\, n(2n+1)(g-1)$. By
semicontinuity,
 $$
 \big\{(E,\varphi)\,\in\, M_{\Sp}(L)\, \mid\, H^0(E)\, \not=\, 0\big\}
 \, \subset\, M_{\Sp}(L)
 $$
is a Zariski closed subset. The lemma will be proved by showing that
the codimension of this subset is positive.

Take a pair
$((E,\varphi)\, ,s)$ such that $(E,\varphi)\,\in\,M_{\Sp}(L)$ and
$s\,\in\, H^0(E)\setminus\{0\}$. It defines a short exact sequence
 \begin{equation}\label{eqn:asdfg-ext}
0\too M \,=\, {\mathcal O}_X(D)\stackrel{s}{\too} E \too Q \too 0\, ,
\end{equation}
where $D$ is the effective divisor defined by $s$.
Let $K$ be the kernel of the composition
$$
E \stackrel{\varphi}{\too} E^\vee\otimes L \too M^\vee\otimes L\, .
$$
Define $Q\,:=\, E/M$. Since $\varphi(M\otimes K) \, =\, 0$, it
follows that $\varphi$ defines a pairing
$$
Q\otimes K\, \longrightarrow\, L\, .
$$
This pairing is perfect because $\varphi$ is pointwise nondegenerate.
In particular, $Q\cong K^\vee \otimes L$. We have a diagram
\begin{equation}\label{e1}
 \begin{array}{ccccccccc}
&&&& 0 && 0 &&\\
&&&&\Big\downarrow && \Big\downarrow &&\\
0 &\too & M & \too & K &\too & F &\too & 0\\
 && || && \Big\downarrow && \Big\downarrow && \\
0 &\too & M &\too & E &\too & Q &\too & 0\\
 &&&& \Big\downarrow && \Big\downarrow \\
 &&&& M^\vee \otimes L &=& M^\vee \otimes L\\
&&&&\Big\downarrow && \Big\downarrow &&\\
&&&& 0 && 0 &&\, ,
 \end{array}
\end{equation}
and there is a symplectic form $\omega_F\, :\, F\otimes F\,\too
\,L$ induced by $\varphi$ (recall that $\varphi(M\otimes K) \, =\, 0$).

Note that under the homomorphism
 $$
 {\rm Ext}^1(Q,M)\too {\rm Ext}^1(F,M)={\rm Ext}^1(M^\vee \otimes L,
F)\, ,
 $$
the class $\xi_1\,\in\, {\rm Ext}^1(Q,M)$ for the bottom exact sequence
in \eqref{e1} maps to the class $\xi_2\,\in\, {\rm Ext}^1(M^\vee \otimes L,
F)$ for the vertical exact sequence in the right of \eqref{e1}.
For a general point $(E',\varphi')\,\in\,M_{\Sp}(L)$, the underlying
vector
bundle $E'$ is stable. If $E$ is a stable vector bundle, then
${\rm Hom} (Q,M)\,=\,0$, because any nonzero homomorphism from
$Q$ to $M$ produces a nilpotent endomorphism of $E$.

Let $\deg M\,=\,\ell$ and $\deg E\,=\,n\cdot \deg L \,=\, d$. If
${\rm Hom} (Q,M)\,=\,0$, then
\begin{equation}\label{eqn:asdfg-dimext1}
\dim {\rm Ext}^1(Q,M)= -\deg(Q^\vee \otimes M) + (2n-1)(g-1)= d -2 n
\ell +
(2n-1)(g-1)\, .
\end{equation}

Now let us see that the symplectic form $\omega_F:F\otimes
F\longrightarrow L$
determines the symplectic form on $E$.
First, $\omega_F$ extends
uniquely to a homomorphism $F\otimes K\longrightarrow L$, which extends
naturally to a homomorphism
$Q\otimes K\longrightarrow L$; both these extensions are consequences
of the fact that $\varphi(M\otimes K)\,=\, 0$. Any two extensions
of the pairing $F\otimes K\longrightarrow L$ to
a pairing $Q\otimes K\longrightarrow L$ differ by a section
contained in
${\rm Hom}((Q/F)\otimes K,L)={\rm Hom}( (M^\vee\otimes L)\otimes K
,L)={\rm Hom}(K,M)$.

We will show that
\begin{equation}\label{c0}
{\rm Hom}(K,M)\,=\,0\, .
\end{equation}
First, if a
homomorphism $K\, \longrightarrow\, M$ composed with
$M\, \longrightarrow\, K$ is non-zero, then it
produces a splitting of
the short exact sequence
$$
0 \longrightarrow M\longrightarrow K\longrightarrow F\longrightarrow
0\, .
$$
So the extension $F \longrightarrow Q \longrightarrow M^\vee\otimes L$ is
split.
Therefore there are maps $Q\longrightarrow F$ and $F\longrightarrow K$, which composed with
$K\longrightarrow E$ splits the diagram $Q\longrightarrow E$, but
this is not possible since $E$ is stable.
So the homomorphism $K\, \longrightarrow\, M$ composed with $M\too K$
is the zero homomorphism. But then the homomorphism
$K\, \longrightarrow\, M$
descends to a homomorphism $F\too M$. Let $S_1$ (respectively, $S_2$) be the kernel of $K\too M$
(respectively, of $F\too M$). Then there is an exact sequence $$0\too M\too S_1\too S_2\too 0\, .$$ So it follows
that $\deg F \leq \deg S_1$.
As $S_1\subset K\subset E$, and
$E$ is a stable bundle, then $\mu(S_1)<\mu(E)$. So $$\frac{\deg L}{2} = \mu(F) \leq \mu(S_1) <\mu(E)=
\frac{\deg L}{2}\, ,$$ which is a contradiction. Therefore, \eqref{c0}
is proved.

Now the homomorphism $Q\otimes K\too L$ extends uniquely to a map $E\otimes K\too L$. This again extends to the map
$\omega_E:E\otimes E\too L$, up to an indeterminacy contained in ${\rm Hom}(E\otimes (E/K),L)=
{\rm Hom}(E\otimes (M^\vee\otimes L),L)$, which actually lives in the subspace ${\rm Hom}((E/M)\otimes (M^\vee\otimes L),L)=
{\rm Hom}(Q,M)=0$.

Then the dimension of the family of bundles parametrizing (\ref{eqn:asdfg-ext}) is
 \begin{eqnarray*}
 &&(2n-1) (n-1) (g-1) +d -2 n \ell + (2n-1)(g-1) -1 \\
&&\leq (2n+1)n(g-1) + d-2n(g-1) -1< (2n+1)n(g-1),
 \end{eqnarray*}
for $d\leq 2n(g-1)$. This completes the proof of the lemma.
\end{proof}

For a symplectic bundle $(E,\varphi)$, let
$$
\End_{\Sp}(E) \, :=\, \text{Sym}^2(E)\otimes L^\vee \, \subset \,
\End(E) \,=\, E\otimes E\otimes L^\vee
$$
be the set consisting of symmetric symplectic endomorphisms of $E$.
For any divisor $D$ on $X$, define
$$
\End_{\Sp} E (D)\, :=\, \End_{\Sp}(E)\otimes_{{\mathcal O}_X}
{\mathcal O}_X(D)\, .
$$

\begin{lemma}\label{lem:asdfg2}
Let $D$ be an effective divisor of degree $\ell$, with
$g\,\geq\, \max\{2\ell,\ell+2\}$. Then
$H^0(\End_{\Sp} E (D))=0$ for a general stable symplectic bundle
$(E,\varphi)\in M_{\Sp}(L)$.
\end{lemma}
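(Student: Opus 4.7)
Plan: I would mirror the strategy of Lemma \ref{lem:asdfg1}. By upper semicontinuity of $h^0$, the subset
$$
Z\,=\,\{(E,\varphi)\in M_{\Sp}(L)\,:\,H^0(\End_{\Sp} E(D))\neq 0\}
$$
is Zariski closed in $M_{\Sp}(L)$, so it suffices to show $\dim Z<\dim M_{\Sp}(L)=n(2n+1)(g-1)$. I would parametrize pairs $((E,\varphi),\psi)$ with $0\neq\psi\in H^0(\End_{\Sp} E(D))$, modulo the $\CC^*$-scaling of $\psi$, and bound the dimension of the resulting family, which dominates $Z$. Note that Riemann--Roch gives $\chi(\End_{\Sp} E(D))=n(2n+1)(\ell-g+1)$, so $g\geq \ell+2$ already makes the expected value of $h^0$ zero; this is consistent with the statement and suggests the bound ought to be achievable by a counting argument.

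Via the isomorphism $\varphi^\flat:E\cong E^\vee\otimes L$ one identifies $\End_{\Sp}(E)\cong\mathfrak{sp}(E)=\op{ad}(P_E)$, so $\psi$ becomes a nonzero morphism $\psi:E\too E(D)$ that is pointwise $\mathfrak{sp}$-valued, hence traceless and $\varphi$-anti-self-adjoint. Set $K=\ker\psi\subset E$, saturated. Anti-self-adjointness gives $\psi(E)\subset K^\perp\otimes\SO_X(D)$. If $K\neq 0$, then $K$ is a proper subbundle of $E$, and I would form the symplectic reduction $F=K^\perp/(K\cap K^\perp)$ of strictly smaller rank, which inherits a symplectic form with values in $L$. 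The data $((E,\varphi),\psi)$ can then be reassembled from: the point $(F,\omega_F)$ in the moduli of symplectic bundles of the smaller rank; the isotropic subbundle $K\cap K^\perp\subset E$; the two extensions realizing the filtration $K\cap K^\perp\subset K,\,K^\perp\subset E$; and the induced injection $\bar\psi:E/K\hookrightarrow K^\perp(D)$, whose deformations are controlled by $\Hom(E/K,K^\perp(D))$. Riemann--Roch expresses each of these dimensions in terms of ranks, degrees, $g$, and $\ell$; stability of $E$ forces $\mu(K)<\deg L/2$; and under $g\geq 2\ell$, a bookkeeping analogous to the one at the end of the proof of Lemma \ref{lem:asdfg1} should yield a total strictly below $\dim M_{\Sp}(L)$.

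The principal obstacle is the case $K=0$, where $\psi$ is injective and no proper subbundle of $E$ is produced directly. Here I would exploit that $\psi$ is an $\SO_X(D)$-twisted $\mathfrak{sp}$-Higgs field: its characteristic polynomial $\det(\lambda\cdot\id-\psi)=\lambda^{2n}+a_2\lambda^{2n-2}+\cdots+a_{2n}$ has only even-degree coefficients $a_{2i}\in H^0(\SO_X(2iD))$, since the spectrum of an element of $\mathfrak{sp}$ comes in $\pm$ pairs. The hypothesis $g\geq 2\ell$ bounds each $h^0(\SO_X(2iD))$ (via Clifford's inequality when applicable, Riemann--Roch otherwise), and hence bounds the Hitchin base; the Beauville--Narasimhan--Ramanan correspondence, applied to pairs with smooth spectral cover $\wt X\subset\op{Tot}(\SO_X(D))$, then bounds the locus of such $((E,\varphi),\psi)$ by the Hitchin base dimension plus $\dim\Pic(\wt X)$, both controllable by $\ell$ and $g$ under the hypothesis. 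Combining the two cases yields $\dim Z<\dim M_{\Sp}(L)$, completing the proof.
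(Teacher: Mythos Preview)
Your strategy---bounding $\dim Z$ by parametrizing pairs $((E,\varphi),\psi)$ and splitting on whether $\ker\psi$ is zero---is genuinely different from the paper's, but as written both branches are incomplete. In the $K\neq 0$ case the symplectic reduction $F=K^\perp/(K\cap K^\perp)$ makes sense fibrewise, but $K\cap K^\perp$ (the radical of $\omega\vert_{K^\perp}$) need not have constant rank, so $F$ is not a priori a vector bundle; even after saturating, $F$ need not be semistable, so it does not define ``a point in the moduli of symplectic bundles of smaller rank'' that you can feed into a count. You then invoke ``bookkeeping analogous to Lemma~\ref{lem:asdfg1}'' without performing it; here there are several discrete invariants ($\rk K$, $\rk(K\cap K^\perp)$) and several continuous pieces (two extensions, the injection $\bar\psi$) to bound, and it is not obvious the total comes out below $n(2n+1)(g-1)$ under $g\geq 2\ell$. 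In the $K=0$ case the spectral-cover idea is plausible, but you stop short of any estimate: for $\SO_X(D)$-twisted symplectic Higgs bundles the fibration is not Lagrangian, so the Prym-fibre dimension is not equal to the base dimension and must be computed separately, while Clifford controls $h^0(\SO_X(2iD))$ only for $2i\ell\leq 2g-2$ and the remaining terms grow linearly in $i$. No inequality is actually verified.

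The paper bypasses moduli-dimension counting entirely: it exhibits one explicit semistable symplectic bundle with the required vanishing and concludes by semicontinuity. After normalising $\deg L\in\{0,1\}$ it argues by induction on $n$. For $n=1$ it builds $F_0$ as an extension $0\to\SO_X\to F_0\to L\to 0$ whose class in $H^1(L^\vee)$ is chosen, using the inequality $g-1+\epsilon>2\ell$, to remain nonzero in $H^1(L^\vee(D))$ for \emph{every} $D\in X^{(\ell)}$; an elementary diagram chase then gives $H^0(\End_{\Sp}F_0(D))=0$. For the inductive step one sets $E=F_0\oplus F_1$ with $F_1$ general of rank $2n-2$ and checks the four blocks of $H^0(\End_{\Sp}E(D))$ separately: the diagonal blocks vanish by construction and by induction, and the off-diagonal blocks reduce to $H^0(F_1(D))=0$ and $H^0(F_1\otimes L^\vee(D))=0$, which are supplied by Lemma~\ref{lem:asdfg1}. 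This produces the vanishing for all $D\in X^{(\ell)}$ simultaneously, which is what the later applications actually need.
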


\begin{proof}
By tensoring with a suitable line bundle, we may assume that $L$ has
degree $\epsilon\, \in\, \{0,1\}$.
This makes the slope of any symplectic bundle to be $\frac{\epsilon}2<g-1$.

Moreover,
we may assume that $L$ is generic in the sense that
$$H^0(L(D))=0 ~\,~\, \text{~and~}~\,~\,
H^0(L^*(D))=0\, , $$
for any $D$ effective divisor of degree $\ell \leq g-1-\epsilon$.
Let $X^{(\ell)}\,=\, \text{Sym}^\ell(X)$ be the
set of effective divisors of degree $\ell$.

For $n=1$, consider any stable vector bundle $F$ of rank two with
determinant $L$.
It has a symplectic structure:
 $$
\omega: F\otimes F \too \wedge^2 F=L\, .
 $$
The symplectic bundle $(F,\omega)$ is automatically stable. However, we
are going to construct an
specific bundle $F_0$ for later use. Consider an extension
 \begin{equation}\label{eqn:asdfg-f0}
 0\too {\mathcal O}_X \too F_0 \too L \too 0\, .
 \end{equation}
These extensions are parametrized by elements in $H^1(L^*)$, and
$\dim H^1(L^*)= g-1+\epsilon$.
Consider an effective divisor $D\in X^{(\ell)}$.
Then the exact sequence
 $$0\too L^* \too L^*(D)\too L^*(D)\vert_D \too 0$$
gives an exact sequence
 $$0\too L^*(D)\vert_D \too H^1(L^*)\too H^1(L^*(D))\too 0\, ,$$
so we get a subspace
$V_D\,:=\, L^*(D)\vert_D \subset\,
H^1(L^*)$ of dimension $\ell$. Moving $D$
over $X^{(\ell)}$, we see that if $g-1+\epsilon> 2\ell$, then there is
an
extension (\ref{eqn:asdfg-f0})
whose class $\xi \in H^1(L^*)$ goes to a non-zero element under
the homomorphism $H^1(L^*)\too H^1(L^*(D))$ for any $D\in X^{(\ell)}$.
Now the connecting homomorphism $H^0({\mathcal O}_X(D)) \too
H^1(L^*(D))$ for the dual sequence
 \begin{equation}\label{eqn:asdfg-f}
 0\too L^*(D) \too F_0^*(D) \too {\mathcal O}_X(D) \too 0
 \end{equation}
of (\ref{eqn:asdfg-f0}),
which is multiplication by $\xi$,
is injective; indeed, if a section $s\in H^0({\mathcal O}_X(D))$,
defining a divisor $D'$, maps to zero, then the extension class $\xi$ goes to
zero under the homomorphism $H^1(L^*)\too H^1(L^*(D'))$, but this is not
the case by construction. This implies that
 $$
 H^0(L^*(D))= H^0(L^*\otimes F_0(D))\, ,
 $$
which is zero by assumption.
Also the exact sequence $$0\too H^0({\mathcal O}_X(D))\too
H^0(F_0(D))\too H^0(L(D))=0$$ implies that
$H^0(F_0(D))= H^0({\mathcal O}_X(D))$. And finally, the exact sequence
 $$
{\rm Hom} (L, F_0(D))=0 \too {\rm Hom} (F_0,F_0(D))\too {\rm
Hom}({\mathcal O}_X, F_0(D))= H^0({\mathcal O}_X(D))\,
 $$
gives that $H^0(\End F_0(D))=H^0({\mathcal O}_X(D))$. Hence
$H^0(\End_{0} F_0(D))=0$, where $\End_0$ denotes the space of trace-free
endomorphisms. Note that
$\End_{\Sp}F_0=\End_0 F_0$, so $H^0(\End_{\Sp} F_0(D))=0$.

Now for $n>1$, consider a general symplectic bundle $(F_1,\omega_1)$ of rank $2n-2$.
By induction hypothesis, $H^0(\End_{\Sp} F_1(D))=0$, for any effective divisor $D$
of degree $\ell$.
Consider the symplectic bundle $E=F_0 \oplus F_1$. This is a symplectic
semistable bundle of rank $2n$. Let us see that
\begin{equation}\label{eqn:asdfg-end}
 H^0(\End_{\Sp}E(D))=0\, .
\end{equation}
This would imply that also for a general stable bundle $\widetilde{E}$, we
have that
 $$H^0(\End_{\Sp}\widetilde{E}(D))=0$$
for all $D\in X^{(\ell)}$ (note that
$X^{(\ell)}$ is a complete variety).

The vector space $H^0(\End_{\Sp}E(D))$ has four
components:
 \begin{itemize}
 \item $H^0(\End_{\Sp}F_0(D))=0$, by construction.
 \item $H^0(\End_{\Sp}F_1(D))=0$, by induction hypothesis.
 \item $H^0({\rm Hom}_{\Sp}(F_0,F_1(D)))=0$. A homomorphism $\varphi:F_0\too
F_1(D)$ can be restricted to ${\mathcal O}\subset F_0$,
so it defines a section in $H^0(F_1(D))$. By Lemma \ref{lem:asdfg1},
this is zero (as
$\mu(F_1(D))= \frac{\epsilon}2 +\ell <g-1$). So $\varphi$
defines a section of the quotient $L \too F_1(D)$, i.e., a section of
$H^0(L^* \otimes F_1(D))$, which is also zero
($L$ is fixed, so can take both $F_1$ and $F_1 \otimes L^*$ to be
simultaneously generic).
\item $H^0({\rm Hom}_{\Sp}(F_1,F_0(D)))=0$.
A homomorphism $\varphi:F_1\too F_0(D)$ gives a homomorphism
$F_0^\vee=F_0 \otimes L^{-1} \too F_1^\vee(D)=F_1 \otimes L^{-1}(D)$,
i.e., a symplectic map
$F_0 \too F_1(D)$, which is zero as above.
\end{itemize}
This completes the proof of the lemma.
\end{proof}

\section{Hitchin discriminant}

Let us recall the definition of the Hitchin map (see \cite[Section
5.10]{Hi}). The holomorphic cotangent bundle of $X$ will be
denoted by $K_X$. A symplectic Higgs bundle is a triple
$(E,\omega, \theta)$, where
$(E,\omega)$ is a symplectic bundle and $\theta:E\too E\otimes K_X$ is a
symmetric map
with respect to $\omega$:
 $$
 \omega(u,\theta(v)) = - \omega(\theta(u),v)
 $$
for $u, v\in E_x$, $x\in X$.

Let $\vicM_{\Sp}(L)$ be the moduli space
of semistable symplectic Higgs bundles of rank $2n$.

As before, $M_{\Sp}(L)$ is the
moduli space of stable symplectic bundles.
The cotangent bundle $T^* M_{\Sp}(L)\subset \vicM_{\Sp}(L)$ is
an open subset.
Consider the affine space:
 $$
 W= H^0(K_X^2) \oplus \ldots \oplus H^0(K_X^{2n})\, ,
 $$
and the \textit{Hitchin map on $T^* M_{\Sp}(L)$}
 $$
 h:T^* M_{\Sp}(L) \too W \, ,
 $$
defined by
$h(\theta)=(s_2(\theta),\ldots, s_{2n}(\theta))$, where
$s_i(\theta)\,=\,{\rm tr}(\wedge^i \theta)$, and
 $$
 \theta\in T^*_E M_{\Sp}(L)= H^0(\End_{\Sp}(E)\otimes K_X)\, .
 $$
This extends to the \textit{Hitchin map} on the moduli
space $\vicM_{\Sp}(L)$ of semistable symplectic Higgs bundles,
 $$
 H: \vicM_{\Sp}(L) \too W \, .
 $$

For an element $s=(s_2,\ldots, s_{2n}) \in W$, the \textit{spectral curve} $X_s$ associated to $s$ is
the curve in the total space $\VV(K_X)$ of $K_X$ defined by the equation
\begin{equation}\label{eqn:spectral-curve}
y^{2n} + s_2(x) y^{2n-2} + \ldots + s_{2n-2}(x) y^2 + s_{2n}(x)=0
\end{equation}
($x$ is a coordinate for $X$, and $y$ is the corresponding tautological
coordinate $dx$ along the fibers of the projection $\VV(K_X)\too X$).

Consider the compactification
$$S:=\PP(\SO_X\oplus K_X) \, \subset\, \VV(K_X)\, .$$
Let $p:S\too X$ be the projection.
Giving a Higgs bundle $(E,\theta:E\to E\otimes K_X)$ is equivalent
to giving a coherent sheaf $A$ of rank one supported on some spectral
curve $S\, \subset\,
\VV(K_X)$.
Indeed, $E=p_* A$, and the Higgs field $\theta$ corresponds to
the homomorphism $A\, \longrightarrow\, A\otimes p^*K_X$
defined by multiplication with the tautological section of
$p^*K_X$ over $\VV(K_X)$ (recall that $S\, \subset\,\VV(K_X)$).
The support of $A$ is given by
the equation \eqref{eqn:spectral-curve}. For more details,
see \cite{Hi}, \cite{BNR} and \cite{Si}.

The symplectic bundle structure $\omega:E\otimes E\too L$ corresponds
to an isomorphism
$$
\sigma^* A \stackrel{\cong}\too Ext^1(A,K_S\otimes p^*K_X) \otimes p^* L
$$
where $\sigma:S\too S$ is the involution $y\longmapsto -y$
(note that the spectral curve is invariant
under this involution because all the exponents of $y$ in
\eqref{eqn:spectral-curve} are even integers).
Indeed, applying $p_*$ to this isomorphism we obtain the
symplectic structure:
$$
p_*\sigma^* A = E \too p_*(Ext^1(A,K_S\otimes p^*K_X^{-1})) \otimes
L=E^\vee \otimes L \; .
$$
The second equality is proved in two steps. There is a spectral sequence
$$R^ip_* Ext^j(\cdot,\cdot) \Rightarrow Ext_p^{i+j}(\cdot,\cdot)\, .$$
Since $A$ has support of dimension 1, we obtain
$$
p_*(Ext^1(A,K_S\otimes p^*K_X^{-1}))= Ext_p^1(A,K_S\otimes
p^*K_X^{-1})\, ,
$$
and the relative Serre duality for the projective morphism $p$
gives
$$
Ext_p^1(A,K_S\otimes p^*K_X^{-1})= p_*(A)^\vee=E^\vee \; .
$$
We can think of $A$ as a sheaf on the spectral curve $X_s$.
If this is integral, then $A$ is torsionfree as a sheaf
on $X_s$, and then
$$
Ext^1(A,K_S\otimes p^*K_X)\,=\, A^\vee \otimes K^{}_{X_s}\otimes \pi^*
K_X^{-1}\, ,
$$
where $\pi:X_s\too X$ is the
projection. For an arbitrary coherent sheaf $A$ on $S$ supported
on $X_s$, define $$A^\vee\, :=\, Ext^1(A,K_S)\otimes
Ext^1(\SO_{X_s},K_S)^\vee\, .$$
If $A$ is locally free on $X_s$, then $A^\vee$ is the
usual dual line bundle on $X_s$.

Fix once and for all a square root line bundle $R=(K^{}_{X_s}\otimes
\pi^*K_X^{-1}\otimes \pi^*L)^{1/2}$.
If we denote $U=A\otimes R$, then
$\sigma^*U \cong U^\vee$. In other words, $U$ is an element
of the Prym subvariety of the compactified Jacobian
$\overline{J}(X_s)$
$$
{\rm Prym}(X_s,\sigma)=\big\{ U \in \overline{J}(X_s): \sigma^* U\cong
U^\vee \big\}\, ,
$$
and, conversely, an element of this Prym produces a symplectic
Higgs bundle whose spectral curve is $X_s$.
Therefore, the fiber of $H$ over $s\in W$ is
isomorphic to ${\rm Prym}(X_s,\sigma)$, and the
isomorphism depends only on the choice of square root $R$.
The dimension of this Prym variety is
 $$
 g(X_s)-g(X_s/\sigma)=n(2n+1)(g(X)-1)= \dim \Sp(2n) (g-1)\, .
 $$

Let $Y$ be an integral curve whose only singularity is one
simple node at a point $y$. Let $$\pi_Y:\wt Y\too Y$$ be the
normalization,
and let $x$ and $z$ be the pre-images of $y$ in $\wt Y$. The
compactified
Jacobian $\overline{J}(Y)$, which parametrizes torsionfree sheaves of
rank 1 and degree 0 on $Y$, is birational to a $\PP^1$-fibration
$P$ over $J(\wt Y)$, whose fiber over any $L\,\in\, J(\wt Y)$
is $\PP^1(L_x\oplus L_z)$. The morphism $P\too \overline{J}(Y)$
is constructed as follows. A point of $P$ corresponds to a
line bundle $L$ on $\wt Y$ and a one dimensional quotient
$q:L_x\oplus L_z \surj \CC$ (up to scalar multiple).
This is sent to the torsionfree sheaf
$L'$ on $Y$ defined as
$$
0 \too L' \too (\pi_Y)_*L \stackrel{q}\too \CC_y \too 0\, .
$$
For the proof, see \cite[Theorem 4]{Bh}.

Assume that $Y$ has an involution $\sigma$. It lifts to an involution
$\wt\sigma$ of $\wt Y$. This induces an involution in $P$. Indeed,
if $(L,q)$ is a point, and $q:L_x\oplus L_z\surj \CC$
is represented by $[a:b]$, then this point $(L,q)$ is
sent to $(\wt\sigma^* L^\vee,q^\vee:=[b:a])$.
Note that the definition of $q^\vee$ makes sense:
if $[a:b]\in \PP(L_x\oplus L_z)$,
then
$$
[b:a]\in \PP(L_z\oplus L_x)=
\PP(L_x^\vee\otimes L_z^\vee\otimes (L_z\oplus L_x))=
\PP(L_x^\vee\oplus L_z^\vee)\, .
$$
The involution on $P$ induces an involution in $\overline{J}(Y)$,
which restricts to $A\longmapsto \sigma^* A^\vee$ on the open
subset $J(Y)\subset\overline{J}(Y)$ corresponding to
line bundles. The fixed point variety of this involution
is the Prym variety
$$
{\rm Prym}(Y,\sigma)\subset \overline{J}(Y) \; .
$$
It is a uniruled variety, because it has a surjective
morphism from the $\PP^1$ fibration $P\vert_{\rm Prym}$
defined by the pullback
$$
\xymatrix{
{P\vert_{\rm Prym}} \ar[r] \ar[d] & {P}\ar[d]\\
{\rm Prym}(\wt Y,\wt\sigma)\ar[r] & J(\wt Y)
}
$$

Analogously, if $Y$ is an integral curve with two simple nodes,
and $\wt Y$ is its normalization, then $\overline{J}(Y)$
is birational to a $\PP^1\times \PP^1$-bundle $P$ on $J(\wt Y)$. If the
nodes are called $y_1$ and $y_2$, then the two $\PP^1$-factors
in the Cartesian product
correspond to one dimensional quotients
$q_1:L_{x_1}\oplus L_{z_1}\surj \CC$ and
$q_2:L_{x_2}\oplus L_{z_2}\surj \CC$, where $\{x_i\, ,z_i\}
\,\subset\, \wt Y$ is the inverse image of $y_i$.

Let $\sigma$ be an involution of $Y$ interchanging the two nodes.
It lifts to an involution of $\wt Y$, and also to an involution
of $P$, sending $(L,q_1,q_2)$ to $(\wt\sigma^* L^\vee
,q_2^\vee,q_1^\vee)$, and this induces an
involution of $\overline{J}(Y)$. A fixed point $(L,q_1,q_2)$ in $P$
for this involution has $L\cong \wt\sigma^*L ^\vee$
and $q_2^{}=q_1^\vee$, hence it
is a $\PP^1$-fibration on ${\rm Prym}(\wt Y)$, and the image
of this map is the fixed point locus on
$\overline{J}(Y)$, which is denoted by ${\rm Prym}(Y,\sigma)$.
We again obtain that this Prym is a uniruled variety.

Consider
 $$
 {\mathcal D}\subset W
 $$
the divisor consisting of characteristic polynomials with singular
spectral curves. This has two components
 $$
 {\mathcal D}={\mathcal D}_1 \cup {\mathcal D}_2\, ,
 $$
where ${\mathcal D}_1$ consists of those curves for which $s_{2n}$ has
a double root (then (\ref{eqn:spectral-curve})
has a node at the horizontal axis), and ${\mathcal D}_2$ consists of curves
with two symmetrical nodes (i.e., $y^{n} + s_2(x) y^{n-1} + \ldots +
s_{2n-2}(x) y + s_{2n}(x)=0$ has
a node). Let ${\mathcal D}^o_i\, \subset\, {\mathcal D}_i$,
$i=1,2$, be the locus of all those curves that
do not contain extra singularities. Finally let
${\mathcal D}^*={\mathcal D} -({\mathcal D}_1^o\bigcup {\mathcal
D}^o_2)$.

\begin{proposition} \label{prop:W-and-D}
As before, $h:T^* M_{\Sp}(L) \too W$ is the Hitchin map. The following
statements hold:
\begin{enumerate}
 \item For $w\in W-{\mathcal D}$, the fiber $h^{-1}(w)$ is an open subset of an abelian variety (actually a Prym variety).
 \item For $w\in {\mathcal D}^o_1$, the fiber $h^{-1}(w)$ is an
open subset of the uniruled variety ${\rm Prym}(X_w,\sigma)$.
 \item For $w\in {\mathcal D}^o_2$, the fiber $h^{-1}(w)$ is an
open subset of the uniruled variety ${\rm Prym}(X_w,\sigma)$.
\end{enumerate}
The complement of the open subsets in each of the cases is of
codimension at least $2$ (at least for generic $w$ in the corresponding set).
\end{proposition}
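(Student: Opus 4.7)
The plan is to use the spectral-curve description of symplectic Higgs bundles set up in the preceding discussion, where the full Hitchin fiber $H^{-1}(w)$ on $\vicM_{\Sp}(L)$ has already been identified with ${\rm Prym}(X_w,\sigma)$ (an abelian variety in case~(1), and shown uniruled via a birational $\PP^1$-fibration in cases~(2) and~(3)). It remains to check (a)~that $h^{-1}(w)\subset H^{-1}(w)$ is Zariski open, and (b)~that the complement has codimension at least $2$ for generic~$w$.

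Step~(a) is formal: since $h^{-1}(w)=H^{-1}(w)\cap T^*M_{\Sp}(L)$, a point belongs to $h^{-1}(w)$ iff the underlying symplectic bundle $(E,\omega)$ is stable, and stability is an open condition in the flat family of symplectic bundles $E=p_*(U\otimes R^{-1})$ parametrised by ${\rm Prym}(X_w,\sigma)$. For step~(b) in case~(1), with $X_w$ smooth, I would argue by a parameter count. The complement $Z_w$ consists of those $U$ whose associated $(E,\omega)$ admits a destabilizing isotropic subbundle $F$; by varying $F$ inside an appropriate Quot scheme and $U$ inside ${\rm Prym}(X_w,\sigma)$, and invoking the vanishing of Lemma~\ref{lem:asdfg2} for generic $E$ to rule out small twisted endomorphisms, one bounds $\dim Z_w$. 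The improvement from a naive codimension~$1$ to codimension~$2$ uses that for generic~$w$ the Higgs field $\theta$ cannot restrict to a Higgs field on such an $F$---otherwise the spectral curve would be reducible---which gives an additional transversality constraint on the pair $(F,\theta)$.

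For cases~(2) and~(3) the fiber $H^{-1}(w)$ is birational to a projective bundle over the smooth abelian variety ${\rm Prym}(\wt X_w,\wt\sigma)$. The non-locally-free torsion-free sheaves $A$ on the nodal curve $X_w$ still have locally free pushforward $E=p_*A$ (since $p$ is finite), so the only obstruction to $U$ lying in $h^{-1}(w)$ is again stability of $(E,\omega)$. Transporting the codimension~$2$ bound from case~(1) via a smoothing deformation of the spectral curve, together with semicontinuity of the non-stable locus across this family, then yields the conclusion in the nodal strata. The main obstacle throughout is the parameter count for $Z_w$ in case~(1): proving $\codim Z_w\geq 2$ rather than merely $\geq 1$ is where Lemma~\ref{lem:asdfg2} must be combined carefully with the genericity of the spectral curve.
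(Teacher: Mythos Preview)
Your step~(a) is fine and matches the paper. Your step~(b), however, takes a much harder route than the paper and contains a real gap.

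The paper does \emph{not} perform a fibrewise parameter count at all. It uses a single global input: by \cite[Theorem~II.6(iii)]{Fa} (strengthened under the hypothesis $g\geq 3$), the complement $\vicM_{\Sp}(L)\setminus T^*M_{\Sp}(L)$ has codimension $\geq 3$ in $\vicM_{\Sp}(L)$. Since the fibres of $H$ are equidimensional of dimension $N=\dim M_{\Sp}(L)$, a straightforward dimension count then shows that over a generic $w\in W\setminus\SD$ the complement meets $H^{-1}(w)$ in codimension $\geq 3$, and over a generic $w$ in the hypersurface stratum $\SD_i^o$ it meets $H^{-1}(w)$ in codimension $\geq 2$. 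That is the entire argument for the codimension claim; no Quot schemes, no smoothing deformations, and no case-by-case analysis of nodal fibres are needed.

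Your proposed route has a concrete gap: the invocation of Lemma~\ref{lem:asdfg2} is misplaced. That lemma asserts $H^0(\End_{\Sp}E(D))=0$ for a \emph{generic stable} $(E,\varphi)\in M_{\Sp}(L)$, and its role in the paper is the surjectivity of the evaluation map $H^0(\End_{\Sp}E\otimes K_X)\to \End_{\Sp}E\otimes K_X|_x$ in Section~6. It says nothing about the dimension of the non-stable locus inside a fixed Prym fibre $H^{-1}(w)$, where the bundles $E$ in question are precisely \emph{not} generic in $M_{\Sp}(L)$ (they all share the same Higgs characteristic polynomial). Likewise, your ``additional transversality constraint'' from irreducibility of $X_w$ and the semicontinuity-across-a-smoothing argument for cases~(2)--(3) are only heuristics; turning them into an honest codimension-$2$ bound would require substantially more work than what you have sketched, and all of it is avoided by the global Faltings estimate.
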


\begin{proof}
The map $H:\vicM_{\Sp}(L) \too W$ is proper.
By \cite{Hi}, $H^{-1}(w)$ is an abelian variety for $w\in W - {\mathcal
D}$. The complement
 $$
 \vicM_{\Sp}(L)- T^*{M}_{\Sp} (L)
 $$
is of codimension $\geq 3$ (the assumption that $g\geq 3$ is used
here).
In \cite[Theorem II.6 (iii)]{Fa} it is proved that the complement
has codimension $\geq 2$ under a weaker assumption, but if
we assume $g\geq 3$, then the same proof gives that the codimension
is $\geq 3$.

Therefore, $(\vicM_{\Sp}(L)- T^*{M}_{\Sp}(L))\cap {\mathcal D}_i$
is
of codimension at least $2$
in ${\mathcal D}_i$,
so for generic $w\in {\mathcal D}^o_i$,
$$H^{-1}(w)-h^{-1}(w)\,\subset\, H^{-1}(w)$$ is of codimension at least
$2$.

The computations of $H^{-1}(w)$ for $w\in {\mathcal D}^o_i$ were done
in the arguments above.
\end{proof}

\begin{proposition} \label{prop:irreducibility}
The hypersurfaces $h^{-1}({\mathcal D}_i)$ are irreducible.
\end{proposition}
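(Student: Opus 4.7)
The plan is the standard fibration argument: show that $\mathcal{D}_i$ is irreducible, that the generic fiber of $h$ over $\mathcal{D}_i^o$ is irreducible, and that $h$ is equidimensional over the generic locus, so that $h^{-1}(\mathcal{D}_i)$ is the closure of an irreducible set.

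First, I would establish irreducibility of $\mathcal{D}_1$ and $\mathcal{D}_2$ by presenting each as the image of an irreducible incidence variety. For $\mathcal{D}_1$ the defining condition is that $s_{2n}$ has a double zero at some $x\in X$ (with $s_2,\ldots,s_{2n-2}$ unconstrained), so $\mathcal{D}_1$ is the image of $\{(s,x)\in W\times X : s_{2n}\in H^0(K_X^{2n}(-2x))\}$, which is a vector bundle over $X$ and hence irreducible. For $\mathcal{D}_2$ the condition is that the polynomial $P_x(Y)=Y^n+s_2(x)Y^{n-1}+\cdots+s_{2n}(x)$ have a singular point at some $(x,Y_0)$ with $Y_0\neq 0$; the analogous incidence variety fibers over $X\times\CC^*$ with affine fibers, and so is again irreducible.

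Second, I would show that for $w\in\mathcal{D}_i^o$ the fiber $h^{-1}(w)$ is irreducible. By Proposition \ref{prop:W-and-D} this fiber is a dense open subset of $\mathrm{Prym}(X_w,\sigma)$, which in Section 3 was exhibited as the image of a $\PP^1$-fibration $P\big|_{\mathrm{Prym}(\widetilde X_w,\widetilde\sigma)}\to\mathrm{Prym}(X_w,\sigma)$. Since the normalization $\widetilde X_w$ is smooth, $\mathrm{Prym}(\widetilde X_w,\widetilde\sigma)$ is a connected abelian variety; the total space of a $\PP^1$-fibration over a connected base is connected, hence so is its image $\mathrm{Prym}(X_w,\sigma)$.

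Third, the Hitchin map $H$ is a Lagrangian fibration on $\vicM_{\Sp}(L)$ and is therefore equidimensional with fibres of dimension $\dim M_{\Sp}(L)$; by the codimension statement at the end of Proposition \ref{prop:W-and-D}, the same dimension is attained by $h^{-1}(w)$ for $w$ generic in each stratum. A surjective equidimensional morphism with irreducible base and irreducible generic fibres has irreducible total space, so $h^{-1}(\mathcal{D}_i^o)$ is irreducible. Since $\mathcal{D}_i^o$ is open and dense in $\mathcal{D}_i$ and equidimensionality forces $h^{-1}(\mathcal{D}_i^o)$ to be dense in $h^{-1}(\mathcal{D}_i)$, the latter is the closure of an irreducible set, hence irreducible.

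The main obstacle I anticipate is in the second step, where one must verify that the $\PP^1$-fibration really surjects onto the \emph{connected} Prym rather than picking out only a component of the \emph{a priori} possibly disconnected scheme $\{U:\sigma^*U\cong U^\vee\}$. The cleanest resolution is a flatness/deformation argument: since $H$ is flat with connected abelian-variety fibres over $W\setminus\mathcal{D}$, connectedness propagates to every fibre of $H$, and in particular to $\mathrm{Prym}(X_w,\sigma)$ for $w\in\mathcal{D}_i^o$.
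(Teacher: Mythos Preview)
Your proposal is correct and follows essentially the same route as the paper: the key input in both is Faltings' result that the Hitchin fibres are Lagrangian (hence equidimensional), which forces $h^{-1}(\mathcal{D}^*)$ to have codimension at least two and makes $h^{-1}(\mathcal{D}_i^o)$ dense in $h^{-1}(\mathcal{D}_i)$. The paper's proof is far more terse---it states only the codimension-two step and leaves the irreducibility of $\mathcal{D}_i$ and of the generic fibres implicit---so your version simply spells out details the paper takes for granted.
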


\begin{proof}
We need to see that $h^{-1}({\mathcal D}^*)$ is of codimension at least
two in $T^*M_{\Sp}(L)$.
This follows easily from Theorem II.5 of \cite{Fa}, which says that the
fibers of the Hitchin map $H:\vicM_{\Sp}(L)\too W$ are Lagrangian
(hence
of half-dimension). So the fibers of $H$ are equidimensional, and in particular
the codimension of $h^{-1}({\mathcal D}^*)$ is that of ${\mathcal D}^*\subset
W$, which is at least two.
\end{proof}

The inverse image $h^{-1}({\mathcal D})$ is called the
\textit{Hitchin discriminant}.

\begin{theorem} \label{thm:Hitchin-discriminant}
 The Hitchin discriminant $h^{-1}({\mathcal D})$ is the closure of the union of
 the (complete) rational curves in $T^* M_{\Sp}(L)$.
\end{theorem}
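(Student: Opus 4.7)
The plan is to prove the two inclusions separately.

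For the inclusion $\bigcup C \subseteq h^{-1}(\mathcal{D})$, where the union runs over complete rational curves $C \subset T^*M_{\Sp}(L)$, the argument is quick: since $W$ is affine, any morphism $\PP^1 \to W$ is constant, so $h(C)$ is a single point and $C$ lies in a fiber of $h$. If $w \notin \mathcal{D}$, then by Proposition \ref{prop:W-and-D}(1) the fiber $h^{-1}(w)$ is an open subset of an abelian variety. Since abelian varieties contain no non-constant rational curves, $C$ must be a point unless $w \in \mathcal{D}$. Hence every complete rational curve in $T^*M_{\Sp}(L)$ is contained in $h^{-1}(\mathcal{D})$.

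For the reverse inclusion $h^{-1}(\mathcal{D}) \subseteq \overline{\bigcup C}$, I would work on each component $h^{-1}(\mathcal{D}_i)$ and use Proposition \ref{prop:W-and-D}(2)-(3) together with Proposition \ref{prop:irreducibility}. For generic $w \in \mathcal{D}_i^o$, the fiber $h^{-1}(w)$ is an open subset of the uniruled Prym variety $\text{Prym}(X_w,\sigma)$, with complement of codimension $\geq 2$. The uniruling comes from the explicit $\PP^1$-fibration $P\vert_{\rm Prym} \to \text{Prym}(\wt X_w, \wt\sigma)$ constructed in the discussion preceding Proposition \ref{prop:W-and-D}, whose general fiber is a complete $\PP^1$. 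A general $\PP^1$-fiber of this fibration avoids the codimension-$\geq 2$ bad locus, and thus sits entirely inside $h^{-1}(w) \subset T^*M_{\Sp}(L)$ as a complete rational curve. These rational curves sweep out a dense open subset of $\text{Prym}(X_w,\sigma)$, hence of $h^{-1}(w)$, for generic $w \in \mathcal{D}_i^o$. Varying $w$, we cover a dense subset of $h^{-1}(\mathcal{D}_i^o)$; by the irreducibility statement of Proposition \ref{prop:irreducibility}, $h^{-1}(\mathcal{D}_i^o)$ is dense in $h^{-1}(\mathcal{D}_i)$, so the closure of the union of these rational curves contains all of $h^{-1}(\mathcal{D}_i)$.

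The main obstacle is the second inclusion, specifically the verification that the $\PP^1$-fibers of the uniruling actually lie inside the locus $T^*M_{\Sp}(L)$ where $h$ is defined, rather than wandering into the ``boundary'' part of the Hitchin moduli space $\vicM_{\Sp}(L)$ where the Higgs bundle is not in the cotangent bundle. This is handled by the codimension bound in Proposition \ref{prop:W-and-D}: since the bad locus inside the Prym fiber has codimension at least $2$, a generic $\PP^1$-fiber of the $P\vert_{\rm Prym}$ fibration misses it entirely, giving a complete rational curve in $T^*M_{\Sp}(L)$. Combining the two inclusions and taking closures yields the theorem.
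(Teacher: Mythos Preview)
Your proposal is correct and follows essentially the same approach as the paper's proof: both directions are argued exactly as you describe, using Proposition~\ref{prop:W-and-D} for the structure of the fibers and Proposition~\ref{prop:irreducibility} to pass from the generic fibers over $\mathcal{D}_i^o$ to all of $h^{-1}(\mathcal{D}_i)$. Your write-up is in fact more explicit than the paper's on the key point that a general $\PP^1$ from the uniruling avoids the codimension-$\geq 2$ complement and hence lands entirely inside $T^*M_{\Sp}(L)$; the paper leaves this implicit in the phrase ``there is a family of $\PP^1$ covering these fibers.''
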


\begin{proof}
 Let $l\cong \PP^1\subset h^{-1}({\mathcal D})$. Then $h(l)\subset W$. As it is
a complete curve, it should be a point. So $l$ is included in a fiber. By Proposition
\ref{prop:W-and-D}, it cannot be contained in a fiber over $w\in W-{\mathcal D}$.

Now let $w\in {\mathcal D}^o$. Then Proposition
\ref{prop:W-and-D} again shows that there is a family of $\PP^1$ covering these
fibers. Now using Proposition \ref{prop:irreducibility}, we get that
the closure is the entire $h^{-1}({\mathcal D})$.
\end{proof}

\section{Torelli theorem}

This section is devoted to a \emph{Torelli type theorem} for the moduli space $M_{\Sp}(L)$, i.e.,
to prove that the moduli space determines the curve $X$ up to isomorphism.

\begin{lemma} \label{lem:4.1}
The global algebraic functions $\Gamma(T^*M_{\Sp}(L))$ produce a map
$$
\widetilde{h}: T^*M_{\Sp}(L) \too \Spec(\Gamma(T^*M_{\Sp}(L)))\cong W
\cong \CC^N\, ,
$$
which is the Hitchin map up to an automorphism of $\CC^N$, where
$N=\dim M_{\Sp}(L)$.

Moreover, consider the standard dilation action of $\CC^*$ on the fibers
of $T^*M_{\Sp}(L)$. Then
there is a unique $\CC^*$-action ``$\cdot$'' on $W$ such that $\widetilde{h}$
is $\CC^*$-equivariant, meaning $\widetilde{h}(E,\lambda\theta)= \lambda
\cdot \widetilde{h}(E,\theta)$.
\end{lemma}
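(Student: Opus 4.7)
The plan is to identify $\Gamma(T^*M_{\Sp}(L))$ with the coordinate ring $\CC[W]$ via pull-back by the Hitchin map, so that $\widetilde{h}$ becomes $h$ once a linear isomorphism $W\cong\CC^N$ is fixed. I would split the argument into two steps: first extend every algebraic function from $T^*M_{\Sp}(L)$ to the full Higgs moduli $\vicM_{\Sp}(L)$, then use properness of $H$ to conclude that these functions are pulled back from $W$.

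For the first step, the proof of Proposition~\ref{prop:W-and-D} (invoking \cite{Fa} and using $g\geq 3$) shows that the complement $\vicM_{\Sp}(L)-T^*M_{\Sp}(L)$ has codimension at least three in $\vicM_{\Sp}(L)$, which is normal as a GIT quotient of a smooth parameter space by a reductive group. Hartogs then yields $\Gamma(T^*M_{\Sp}(L))=\Gamma(\vicM_{\Sp}(L))$. For the second step, the extended Hitchin map $H:\vicM_{\Sp}(L)\too W$ is proper \cite{Hi} and surjective with generic fibre a smooth irreducible Prym variety by Proposition~\ref{prop:W-and-D}(1); since $W$ is smooth and $\vicM_{\Sp}(L)$ is integral, Stein factorisation gives $H_*\SO_{\vicM_{\Sp}(L)}=\SO_W$. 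Combining, $H^*$ identifies $\CC[W]$ with $\Gamma(T^*M_{\Sp}(L))$. Taking $\Spec$ and composing with a fixed linear isomorphism $W\cong\CC^N$ coming from chosen bases of the $H^0(K_X^{2i})$ realises $\widetilde{h}$ as $h$ up to an automorphism of $\CC^N$.

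For the second assertion, the rescaling $(E,\theta)\mapsto (E,\lambda\theta)$ multiplies the coordinate function $s_{2i}(\theta)=\tr(\wedge^{2i}\theta)\in H^0(K_X^{2i})$ by $\lambda^{2i}$, so the weighted action
$$
\lambda\cdot (w_2,w_4,\ldots,w_{2n})=(\lambda^2 w_2,\lambda^4 w_4,\ldots,\lambda^{2n}w_{2n})
$$
on $W$ renders $h$, and therefore $\widetilde{h}$, equivariant. Uniqueness is automatic from dominance of $\widetilde{h}$: two $\CC^*$-actions on $W$ that agree on its image must coincide. The main obstacle I foresee is the clean handling of the first step---the codimension bound on the complement and normality of $\vicM_{\Sp}(L)$---both of which are standard but require invoking results beyond the excerpt; the Stein-factorisation step and the exponent computation for the $\CC^*$-action are then essentially formal.
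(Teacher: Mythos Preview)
Your proposal is correct and follows essentially the same strategy as the paper: reduce to the full Higgs moduli $\vicM_{\Sp}(L)$ via a codimension/Hartogs argument, then invoke properness of $H$ (you phrase this via Stein factorisation, the paper simply cites \cite{Hi}) to identify global functions with $\CC[W]$. The only minor difference is that the paper appeals to the fibrewise codimension bound from \cite[Theorem II.6 (i)]{Fa} on the smooth generic fibres rather than the global codimension-$3$ bound plus normality of $\vicM_{\Sp}(L)$; both routes are valid and yield the same conclusion.
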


\begin{proof}
This holds for the Hitchin map $H$ on the moduli
of semistable symplectic Higgs bundles $\vicM_{\Sp}(L)$
(cf. \cite{Hi}). On the other hand, the generic fiber
of $H$ is smooth and the codimension
of $T^*M_{\Sp}(L)\subset \vicM_{\Sp}(L)$ on these fibers
is at least two
(cf. \cite[Theorem II.6 (i)]{Fa}, and note that $T^*M_{\Sp}(L)$
is a subset of the moduli $\vicM^0_{\Sp}(L)$ of stable Higgs
bundles). Therefore, it follows that
the lemma also holds
for the restriction of the Hitchin map to the
cotangent bundle $T^*M_{\Sp}(L)$.
\end{proof}

Lemma \ref{lem:4.1} allows us to recover the base $W$ of the Hitchin fibration
as an algebraic manifold. Although there is an isomorphism $W \cong
\bigoplus_{k=1}^n H^0(K_X^{2k})$, we do not know at this point how to
recover the spaces $W_{2k}\subset W$ corresponding to $H^0(K_X^{2k})$.
Lemma \ref{lem:4.1} also gives us the natural ${\mathbb C}^*$-action
on $W$. This gives us the origin of $W$ (as the only fixed
point of the action). Also the subspaces
$W_{\geq 2k}=\bigoplus_{t=k}^n W_{2t}$, for each $t=2,\ldots, n$,
are uniquely determined (these are the spaces where the ratio of
convergence is bigger than $\lambda^{2k}$, for $\lambda \too 0$).
In particular, $W_{2n}\subset W$ is well-determined. Moreover, the $\CC^*$-action on
$W_{2n}$ determines the usual $\CC^*$-action of weight one, which is multiplication
by scalars. This determines the vector space structure of $W_{2n}$.
So we recover $W_{2n}=H^0(K_X^{2n})$.

\begin{proposition}\label{prop:4.2}
Let $\SC$ be the intersection of $W_{2n}=H^0(K_X^{2n}) \subset W$ with
$\SD_1\bigcup \SD_2$. This $\SC$ is irreducible. Moreover
 ${\mathbb P}({\mathcal C})\subset {\mathbb P}(W_{2n})$
 is the dual variety of $X\subset {\mathbb P}(W_{2n}^*)$ for the embedding
 given by the linear series $\vert K_X^{2n}\vert$.
\end{proposition}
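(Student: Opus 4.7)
The plan is to identify $\SC$ with the hypersurface of sections $s\in H^0(K_X^{2n})$ possessing a zero of order at least two, and then to recognize this hypersurface as exactly the projective dual of $X\subset \PP(W_{2n}^*)$.

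First I would identify $\SC$ set-theoretically. A point $s\in W_{2n}$ corresponds to a Hitchin base point with $s_2=\cdots=s_{2n-2}=0$, so its spectral curve in $\VV(K_X)$ is cut out by $y^{2n}+s(x)=0$. The Jacobian criterion requires simultaneously $y^{2n}+s(x)=0$, $2ny^{2n-1}=0$ and $s'(x)=0$; the middle equation forces $y=0$, and the remaining two then say precisely that $s$ has a double zero at $x$. In particular every singularity lies on the zero section $y=0$, so the spectral curve belongs to $\SD_1$ and not to $\SD_2$ (the $\SD_2$-defining polynomial in the variable $Y=y^2$ becomes $Y^n+s(x)=0$, whose singularities again force $Y=0$ for $n\geq 2$). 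Hence $\SC$ is exactly the locus of sections in $H^0(K_X^{2n})$ with a double zero.

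Next I would identify this with the dual variety. Since $g\geq 3$ and $n\geq 1$, $\deg K_X^{2n}=4n(g-1)\geq 4(g-1)\geq 2g+1$, so $|K_X^{2n}|$ is very ample and embeds $X\hookrightarrow \PP(W_{2n}^*)$. A nonzero $s\in W_{2n}$ defines a hyperplane $H_s\subset \PP(W_{2n}^*)$ whose scheme-theoretic intersection with $X$ is the zero divisor of $s$, so $H_s$ is tangent to $X$ at some point precisely when $s$ has a zero of multiplicity at least two. By the definition of the projective dual, $X^\vee\subset \PP(W_{2n})$ is the closure of the set of such $[s]$, so by Step 1 we conclude $\PP(\SC)=X^\vee$.

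Finally, for irreducibility, consider the incidence variety
$$I=\big\{(x,[s])\in X\times \PP(W_{2n}) : s \text{ vanishes to order }\geq 2 \text{ at }x\big\}\, .$$
Very ampleness of $K_X^{2n}$ gives $h^0(K_X^{2n}(-2x))=h^0(K_X^{2n})-2$ independently of $x\in X$, so the first projection makes $I$ a projective bundle over $X$, hence irreducible. The image of $I$ under the (proper) second projection is $\PP(\SC)$, which is therefore irreducible, and passing to affine cones gives the irreducibility of $\SC$ itself. The only bookkeeping requiring real care is excluding spurious contributions to $\SC$ from $\SD_2$ (and checking one does not hit $\SD^*$ in a way that forces two components); this is precisely what the Jacobian computation in Step 1 accomplishes, leaving the single double-zero divisor as the entire irreducible hypersurface.
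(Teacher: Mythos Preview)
Your proof is correct and follows essentially the same approach as the paper: identify $\SC$ with the locus of sections of $K_X^{2n}$ having a double zero via the Jacobian criterion applied to $y^{2n}+s(x)=0$, then recognize this as the projective dual of $X$ under the very ample series $|K_X^{2n}|$. The only substantive difference is that you spell out irreducibility via the incidence variety (a $\PP^{N-2}$-bundle over $X$), whereas the paper simply asserts it after noting $\SC=\SD_1\cap W_{2n}$, implicitly relying on the standard fact that the dual of an irreducible variety is irreducible.
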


\begin{proof}
A spectral curve corresponding to a point of
$s_{2n}\in H^0(K_X^{2n})$
has equation $y^{2n}+s_{2n}(x)=0$, and this curve
is singular at the points with coordinates $(x,0)$
such that $x$ is a zero of $s_{2n}$ of order at least
two. Clearly $\SC=\SD_1\cap W_{2n}$. On the other hand,
$\SD_2\cap W_{2n}\subset \SC$, since it consists of singular curves.
Therefore, the first statement follows.

The elements $b\in W_{2n}$ correspond to spectral curves of the form
$y^{2n}+b(x)=0$.
 We have $b\in {\mathcal C}$ if and only if there is some $x_0$ such
that $b(x_0)=0$ and
$b'(x_0)=0$ simultaneously, therefore
$b\in H^0(K_X^{2n}(-2x_0)) \subset H^0(K_X^{2n})$. From this
the second statement follows, taking into
account that the linear system
$\vert K_X^{2n}\vert $ is very ample, so $X$ is embedded.
\end{proof}

Denote
 $$
 {\mathcal C}_x= H^0(K_X^{2n}(-2x)) \subset W_{2n}
 $$
Then
 $$
 {\mathcal C}= \bigcup_{x\in X} {\mathcal C}_x\, ,
 $$
and taking the bundle of tangent hyperplanes to $X\subset {\mathbb P}(W_{2n}^*)$,
we have
$$
\begin{array}{ccc}
\widetilde{\mathcal C}= \bigsqcup {\mathcal C}_x &
\stackrel{F}{\longrightarrow}
 & {\mathcal C} \\
 \Big\downarrow \\
 X
\end{array}
$$
We shall also need to consider the bundle of hyperplanes through a given
point of $x$, i.e.,
$$
\widetilde{\mathcal H}=\bigsqcup {\mathcal H}_x \too X\, ,
$$
where 
 \begin{equation} \label{eqn:2011-ja}
  {\mathcal H}_x=H^0(K_X^{2n}(-x)) \subset W_{2n} \, .
  \end{equation}
This is intrinsically defined once we have obtained $X$.
\medskip

The following theorem is proved in \cite{BH} by a different method.

\begin{theorem}[Torelli] \label{thm:torelli}
 Let $X$ and $X'$ be two smooth projective curves of genus $g\geq 3$,
and
let
 $M_{\Sp}(L)$ and $M'_{\Sp}(L')$ be moduli spaces of stable symplectic
bundles over $X$ and $X'$ respectively.
 If the variety $M_{\Sp}(L)$ is isomorphic to
$M'_{\Sp}(L')$, then $X\cong X'$.
\end{theorem}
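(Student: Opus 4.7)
The plan is to extract from $M_{\Sp}(L)$ enough intrinsic data---the Hitchin base with its $\CC^*$-action, the top-weight subspace $W_{2n}$, and a distinguished subvariety $\SC$ of it---to reconstruct $X$ via projective biduality applied to Proposition \ref{prop:4.2}. First, an isomorphism $M_{\Sp}(L)\cong M'_{\Sp}(L')$ induces a $\CC^*$-equivariant isomorphism of cotangent bundles, where $\CC^*$ acts by canonical fiberwise dilation. By Lemma \ref{lem:4.1}, taking global functions produces the Hitchin base intrinsically as $W\cong \Spec\Gamma(T^*M_{\Sp}(L))$, together with a unique $\CC^*$-action making the Hitchin map equivariant; hence the isomorphism of moduli spaces descends to a $\CC^*$-equivariant isomorphism $\Psi: W\xrightarrow{\cong} W'$.

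Next, I would exploit the weight decomposition $W=\bigoplus_{k=1}^n W_{2k}$ with $W_{2k}=H^0(K_X^{2k})$. Equivariance of $\Psi$ forces it to respect the weight filtration, so in particular $n=n'$ and $\Psi$ restricts to a linear isomorphism $W_{2n}\cong W'_{2n}$; linearity follows because an algebraic $\CC^*$-equivariant map between vector spaces carrying a pure weight-$2n$ action must be of polynomial degree one. The dimensions $\dim W_{2k}=(4k-1)(g-1)$ then also pin down $g$. By Theorem \ref{thm:Hitchin-discriminant}, the Hitchin discriminant $h^{-1}(\SD)$ is characterized intrinsically as the closure of the union of complete rational curves in $T^*M_{\Sp}(L)$, so its image $\SD\subset W$ is intrinsic, and therefore the subvariety $\SC=\SD\cap W_{2n}$ of Proposition \ref{prop:4.2} is carried by $\Psi$ onto the corresponding $\SC'\subset W'_{2n}$.

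Finally, Proposition \ref{prop:4.2} identifies $\PP(\SC)\subset \PP(W_{2n})$ as the dual variety of $X\hookrightarrow \PP(W_{2n}^\vee)$ under the embedding given by the very ample linear system $|K_X^{2n}|$. Projective biduality in characteristic zero then recovers $X$ as the dual variety of the irreducible projective variety $\PP(\SC)$; since $\Psi$ carries $\PP(\SC)$ onto $\PP(\SC')$, the two duals are isomorphic, yielding $X\cong X'$. The hard work has been done in the preceding sections: all the genuine geometric content is concentrated in Lemma \ref{lem:4.1}, Theorem \ref{thm:Hitchin-discriminant}, and Proposition \ref{prop:4.2}, and the Torelli statement itself is essentially their formal assembly together with projective biduality, with no substantial further obstacle to anticipate.
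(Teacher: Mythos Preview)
Your proposal is correct and follows essentially the same approach as the paper: pass to cotangent bundles, invoke Lemma \ref{lem:4.1} to get the Hitchin base with its $\CC^*$-action, isolate $W_{2n}$ via the weights, use the intrinsic characterization of the Hitchin discriminant to transport $\SC$, and finish via Proposition \ref{prop:4.2} and biduality. The only cosmetic difference is that you cite Theorem \ref{thm:Hitchin-discriminant} for the intrinsicness of $\SD$ whereas the paper points to Proposition \ref{prop:W-and-D}, but these amount to the same thing.
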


\begin{proof}
Suppose $\Phi: M_{\Sp}(L)\too M'_{\Sp}(L')$ is an isomorphism. Then
there is an isomorphism
 $d\Phi: T^*M_{\Sp}(L)\too T^*M'_{\Sp}(L')$. By Lemma \ref{lem:4.1}, there
is a commutative diagram
$$
\begin{array}{ccc}
T^*M_{\Sp}(L) & \stackrel{d\Phi}{\too} & T^*M'_{\Sp}(L') \\
\Big\downarrow & & \Big\downarrow \\
W & \stackrel{f}{\too} & W'
\end{array}
$$
for some isomorphism $f:W\too W'$. The ${\mathbb C}^*$-actions by
dilations on the fibers of $T^*M_{\Sp}(L)$ and $T^*M'_{\Sp}(L')$
induce ${\mathbb C}^*$-actions on $W$ and $W'$, and $f$ should be
${\mathbb C}^*$-equivariant (as $d\Phi$ is ${\mathbb C}^*$-equivariant).
Therefore $f: W_{2n}\too W_{2n}'$, and it is linear.

We have seen in Proposition \ref{prop:W-and-D}
that the Hitchin discriminant
${\mathcal D}={\mathcal D}_1\cup {\mathcal D}_2\subset W$ is an intrinsically defined subset,
and therefore it is preserved by $f$. So $f$ preserves ${\mathcal C}={\mathcal D}\cap W_{2n}$. This induces
an isomorphism of the corresponding dual varieties, hence by
Proposition \ref{prop:4.2}, an isomorphism $\sigma:X\longrightarrow X'$ is obtained.
\end{proof}

\begin{remark}
{\rm Take $X$ and $X'$ as in Theorem \ref{thm:torelli}. Let
$\overline{M}_{\Sp}(L)$ and $\overline{M}'_{\Sp}(L')$ be moduli spaces
of semistable symplectic bundles over $X$ and $X'$ respectively.
The proof of Theorem \ref{thm:torelli} gives that
$X$ is isomorphic to $X'$ if $\overline{M}_{\Sp}(L) \cong
\overline{M}'_{\Sp}(L')$.}
\end{remark}

\begin{remark}\label{rem:2011-1}
{\rm Let us see that we can extend the previous arguments to recover the
(linear)
projection $\pi_{2n}:W \longrightarrow W_{2n}$ (even though we do not
know the linear structure of $W$). This means that the map
$f$ in the proof of Theorem \ref{thm:torelli} commutes with $\pi_{2n}$.

First of all, the two irreducible components ${\mathcal D}_1$, ${\mathcal D}_2$
of ${\mathcal D}$ are well characterized. Suppose this for a moment.
The divisor ${\mathcal D}_1$ consists
of $(s_2,\ldots, s_{2n})$ such that there exists a point $x\in X$ with
$s_{2n}(x)=s_{2n}'(x)=0$.
So, writing $W'=\bigoplus_{k=1}^{n-1} W_{2k}\subset W$, we have
 $$
  {\mathcal D}_1 = \bigcup_{x\in X} (W'\times {\mathcal C}_x ) \subset W\, .
 $$
As such, there is a unique rational map ${\mathcal D}_1 \dashrightarrow X$ (up to
automorphisms of the base) whose fibers are connected and rational (actually,
isomorphic to open subsets of linear spaces). This determines each
fiber $W'\times {\mathcal C}_x$, $x\in X$, and a fortiori, the intersection
$W'\subset W$ of all of them. It is not difficult to see
that the
information of $W_{2n}$, $W'$ and the $\CC^*$-action together give
the decomposition $W=W'\times W_{2n}$. The projection $\pi_{2n}$ follows.

It remains to identify the subvariety ${\mathcal D}_1$. Linearize the
$\CC^*$-action at the origin of $W$. According to the weights,
the tangent space $T_0W$ decomposes as $\bigoplus_{k=1}^n W_{2k}$ (here the
decomposition is clearly well-defined), and also
$W'=\bigoplus_{k=1}^{n-1} W_{2k}
\subset T_0W$. The tangent cone to ${\mathcal D}_1$ is
$\bigcup_{x\in X} ( W'\times {\mathcal C}_x) \subset T_0W$. This is \emph{not} the
tangent cone to ${\mathcal D}_2$. So this property (for instance) can be used
to characterize ${\mathcal D}_1$.}
\end{remark}

\section{Nilpotent cone and flag variety}

We need some results on linear algebra about the space of symplectic
endomorphisms.
Let $(V,\omega)$ be a symplectic vector space of dimension $2n$.
In this section, we want to study the
\textit{symplectic nilpotent cone}
 $$
 {\mathcal N} = \{ A\in \End_{\Sp} V \, \mid \, A^{2n}=0\}\, .
 $$

\begin{lemma}\label{lem:N}
 The following statements hold:
 \begin{enumerate}
 \item ${\mathcal N}$ is a $2n^2$-dimensional algebraic variety.
 \item $A\in {\mathcal N}$ if and only if ${\rm tr}(A^{2r})=0$, for all
$r= 1, \ldots, n$.
 \item $A\in {\mathcal N}^{sm}$ (the smooth locus of $\mathcal N$) if
and only if $\rk A=2n-1$.
 \item Let $Fl(V,\omega)$ be the set of full isotropic flags on ${\mathbb C}^{2n}$. Then there is
 a fibration $\pi:{\mathcal N}^{sm} \too Fl(V,\omega)$ with fibers
isomorphic to
 $({\mathbb C}^*)^{n} \times {\mathbb C}^{n^2-n}$.
 \end{enumerate}
\end{lemma}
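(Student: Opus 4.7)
The plan is to identify $\End_{\Sp}(V)$ with the symplectic Lie algebra $\mathfrak{sp}(V,\omega)$ via the symplectic form, so that $\mathcal{N}$ becomes the nilpotent cone of this semisimple Lie algebra; each of (1)--(4) then reduces to standard structure theory of nilpotent cones and principal nilpotents.

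For (2), every $A \in \mathfrak{sp}(V)$ satisfies $A^T = -JAJ^{-1}$, which forces $\tr(A^{2r+1}) = 0$ identically. The characteristic polynomial $\det(tI - A)$ is therefore a polynomial in $t^2$, and $A$ is nilpotent iff all its non-leading coefficients vanish; by Newton's identities these coefficients are equivalent data to the power sums $\tr(A^{2r})$ for $r = 1, \ldots, n$, yielding the criterion. For (1), this exhibits $\mathcal{N}$ as the scheme-theoretic fiber over $0$ of the Chevalley map $\chi : \mathfrak{sp}(V) \too \CC^n$, $A \mapsto (\tr A^2, \ldots, \tr A^{2n})$. By Kostant's theorem $\chi$ is a flat complete intersection of codimension $n = \rk \mathfrak{sp}(V)$, so $\dim \mathcal{N} = n(2n+1) - n = 2n^2$, and $\mathcal{N}$ is the closure of the single connected principal nilpotent orbit, hence irreducible. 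For (3), Kostant's description of $\mathcal{N}$ as a normal variety whose smooth locus is exactly the principal nilpotent orbit reduces matters to the linear algebra observation that a nilpotent $2n \times 2n$ matrix has a single Jordan block iff $\dim \ker A = 1$, i.e., iff $\rk A = 2n - 1$, and any such matrix sits in a principal $\mathfrak{sl}_2$-triple in $\mathfrak{sp}(V)$.

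For (4), given $A \in \mathcal{N}^{sm}$ set $F_i := \ker A^i$ for $i = 1, \ldots, n$; then $\dim F_i = i$, and using $A^T = -A$ (with respect to $\omega$) one computes $(\ker A^i)^\perp = \im(A^i)$. Since $A$ is a single Jordan block, $\im(A^i) = \ker A^{2n-i}$, so $F_i \subset F_i^\perp$ for $i \leq n$ and $F_n$ is Lagrangian. This defines the map $\pi : \mathcal{N}^{sm} \too Fl(V,\omega)$. Extending any given isotropic flag $F_\bullet$ by $F_{n+i} := F_{n-i}^\perp$ yields a full flag stabilized by a Borel $B \subset \Sp(V)$ with nilradical $\mathfrak{n}$ of dimension $n^2$; the fiber $\pi^{-1}(F_\bullet)$ then consists of the regular nilpotents in $\mathfrak{n}$. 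In the root space decomposition $\mathfrak{n} = \bigoplus_{\alpha > 0}\mathfrak{g}_\alpha$, a classical theorem of Kostant identifies these as the elements $\sum c_\alpha e_\alpha$ whose $n$ simple-root components are all nonzero, with the remaining positive-root coordinates unconstrained, yielding $(\CC^*)^n \times \CC^{n^2 - n}$.

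The main obstacle is the non-trivial direction of (3), namely showing that every non-regular nilpotent is actually a singular point of $\mathcal{N}$. This requires either invoking Kostant's normality theorem together with the fact that the subregular orbit has codimension two in $\mathcal{N}$, or a direct Jacobian computation showing that the differentials $C \mapsto \tr(A^{2r-1} C)$ are linearly independent functionals on $\mathfrak{sp}(V)$ precisely when $A$ is regular.
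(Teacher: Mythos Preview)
Your proposal is correct, and the overall architecture matches the paper's: both identify $\End_{\Sp}(V)$ with $\mathfrak{sp}(V,\omega)$ (in the paper this is implicit in the definition), both prove (2) via the observation that odd power traces vanish and even ones determine the characteristic polynomial, and both build the map in (4) from the kernel flag $\ker A^i$, checking isotropy via $\omega(A^i u,v)=(-1)^i\omega(u,A^i v)$.

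The genuine difference is in how much Lie theory is invoked. For (1) you appeal to Kostant's flatness of the adjoint quotient to get the exact dimension and irreducibility, whereas the paper simply counts the $n$ defining equations $q_{2}(A)=\cdots=q_{2n}(A)=0$ inside the $n(2n+1)$-dimensional space. For (3) your primary route is to quote Kostant's identification of the smooth locus of the nilpotent cone with the regular orbit; the paper instead carries out directly the Jacobian computation you mention at the end, showing that $T_A\mathcal{N}=\{B:\tr(A^{2r-1}B)=0,\ r=1,\dots,n\}$ has codimension exactly $n$ iff $A^{2n-1}\neq 0$, i.e.\ iff $\rk A=2n-1$. For (4) you describe the fibre via the root-space decomposition of the nilradical and Kostant's criterion that a nilpotent in $\mathfrak n$ is regular iff all simple-root coordinates are nonzero; the paper instead fixes a symplectic basis adapted to the flag and writes the fibre as an explicit set of block-triangular matrices, reading off the $(\CC^*)^n\times\CC^{n^2-n}$ structure from the nonvanishing subdiagonal entries. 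Your approach is cleaner and situates the lemma in the general theory of nilpotent cones; the paper's is entirely elementary and self-contained, which is a deliberate choice since the subsequent Lemma on maximal linear subspaces of $\mathcal N$ is also proved by hand rather than by citing \cite{DKK}.
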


\begin{proof}
 Statement (1) is clear, since ${\mathcal N}$ is defined by the
equations
$q_2(A)=\ldots =q_{2n}(A)=0$, where $p_A(t)=t^{2n}+q_2(A) t^{2n-2} + \ldots + q_{2n}(A)$ is
the characteristic polynomial of $A\in \End_{\Sp}V$.
As $\dim \End_{\Sp}V=n(2n+1)$, and we have $n$ equations,
it follows that $\dim {\mathcal N}=2n^2$.

To prove statement (2), note that if $\pm \lambda_1,\ldots, \pm
\lambda_n$ are the eigenvalues of $A$,
then ${\rm tr}(A^r)= 0$ for $r$ odd, and ${\rm tr}(A^r)=2\sum
\lambda_i^r$ for $r$ even.
Then the equations ${\rm tr}(A^r)=0$, $r=2,4,\ldots, 2n$, are
equivalent to $\lambda_1=\ldots =\lambda_n=0$, i.e., $A^{2n}=0$.

Now we will prove statement (3). Let $B\in \End_{\Sp}V$. Considering
$\frac{d}{d\epsilon} \big\vert_{\epsilon=0} {\rm tr}((A+\epsilon
B)^{2r})=0$, $r=1,\ldots, n$,
we see that
 $$
 T_A{\mathcal N}=\{B\, \mid \, {\rm tr}(A^{2r-1} B)=0, \, r=1,\ldots,
n\}\, .
 $$
This has codimension strictly less than $n$ when $A^{2n-1}=0$. So $\rk
A=2n-1$ at
a smooth point.
For the converse, if $\rk A=2n-1$, then the matrices $I, A, A^2, \ldots, A^{2n-1}$
are linearly independent, and $A^{2k-1}\in \End_{\Sp}V$, $k=1,\ldots, n$.
Therefore, the $n$ equations ${\rm tr}(A^{2r-1}B)=0$, $r=1,\ldots, n$,
for $B\in \End_{\Sp}V$ are
linearly independent, and $\codim T_A{\mathcal N}=n$. Hence $A\in {\mathcal N}^{sm}$, as required.

Finally we prove statement (4). Note that if $A\in {\mathcal N}^{sm}$,
then $\rk A=2n-1$.
This determines a well-defined full flag
 \begin{equation}\label{eqn:flag}
 0 \subset \ker A \subset \ker A^2 \subset \ldots \subset \ker A^{2n-1}
\subset {\mathbb C}^{2n}\, .
 \end{equation}
Let us see that $\ker A^i$ is dual (with respect $\omega$) to $\ker A^{2n-i}$. For this, note that
$\ker A^{2n-i}=\im A^i$. If $u=A^i u_0$, and $v\in \ker A^i$, then
$$\omega(u,v)=\omega(A^iu_0,v)=
(-1)^i \omega (u_0, A^i v)=0\, .$$ This means that the flag
in (\ref{eqn:flag}) is isotropic (in particular,
$\ker A^n$ is Lagrangian).

The fiber over a point of the flag variety is given as follows. Fix a
symplectic basis $e_1,\ldots, e_n,e_{n+1},
\ldots, e_{2n}$, such that the flag is
 \begin{equation}\label{eqn:f0}
 \langle e_{n} \rangle \subset \langle e_{n-1},e_{n}\rangle \subset
\cdots \subset
\langle e_{1},\ldots, e_{n} \rangle \subset
\langle e_1, \ldots, e_{n},e_{n+1} \rangle \subset
\cdots \subset \langle e_1, \ldots, e_{2n} \rangle \, .
 \end{equation}
Then the matrices in the fiber are of the form
\begin{equation}\label{eqn:jordan}
\left(\begin{array}{cc} A & B \\ 0 & -A^T
\end{array}\right) , \text{ where }
A= \left(\begin{array} {ccccc}
0 & 0& 0&\ldots & 0 \\
a_{21} & 0 &0& \ldots & 0\\
a_{31}& a_{32}& 0 & \ldots & 0 \\
\vdots & &\ddots&\ddots & \vdots \\
a_{n,1} & a_{n,2} & \ldots & a_{n, n-1} & 0
 \end{array} \right)\,
 \end{equation}
with $a_{i+1,i}\neq 0$, and $B=(b_{ij})$ is symmetric with $b_{11}\neq 0$.
So the fiber of
 $$\pi:{\mathcal N}^{sm} \too Fl(V,\omega)$$
is
$({\mathbb C}^*)^{n} \times {\mathbb C}^{n^2-n}$ (recall that
$Fl(V,\omega)$ is defined in the statement (4) of the lemma).
\end{proof}

We will now prove that ${\mathcal
N}^{sm}$ determines $Fl(V,\omega)$.
Consider the fibration $\pi$ in Lemma \ref{lem:N} (4).
We shall show that the fibers of $\pi$ are intrinsically defined. Take any
$F\in Fl(V,\omega)$. Let $$U_F\subset {\mathcal N}\subset \End_{\Sp}(V)$$
be the space
of symmetric endomorphisms respecting the flag $F$. It is a
linear subspace of $\End_{\Sp}(V)$ of dimension $n^2$ contained in the
nilpotent cone. By Lemma \ref{lem:agotador}
below, all subspaces of $\End_{\Sp}(V)$ of dimension $n^2$ contained in
the nilpotent cone are of the form $U_F$ for some $F$. Moreover, $U_F\cap
{\mathcal N}^{sm}$ is the fiber of $\pi$ over $F$. Therefore
$\pi$ is uniquely defined, up to
automorphism of the base.

\begin{lemma} \label{lem:agotador}
Let $L \subset {\mathcal N}\subset \End_{\Sp}(V)$ be a linear subspace
of dimension $n^2$ such that $L \cap {\mathcal N}^{sm} \neq
\emptyset$. Then there exists a unique flag $F$ such that $L=U_F$.
\end{lemma}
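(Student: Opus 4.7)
The plan is to fix $A_0 \in L \cap \mathcal{N}^{sm}$; by Lemma \ref{lem:N}(4), $A_0$ has rank $2n-1$ and determines a unique full isotropic flag $F_0$ via $V_i = \ker A_0^i$. Since $\dim L = n^2 = \dim U_{F_0}$, it suffices to show $L \subset U_{F_0}$. Uniqueness of $F_0$ is then immediate: any other $F$ with $L = U_F$ would force $A_0 \in U_F$ to lie in the Borel subalgebra of $\End_{\Sp}(V)$ stabilizing $F$, but a regular nilpotent is contained in a unique Borel subalgebra.

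To prove $L \subset U_{F_0}$, I will work in a Jordan basis $v_1, \ldots, v_{2n}$ for $A_0$ adapted to $F_0$, so $A_0 v_j = v_{j-1}$ (with $v_0 = 0$) and $V_i = \langle v_1, \ldots, v_i\rangle$. The condition $A_0 \in \End_{\Sp}(V)$ forces $\omega(v_i, v_{2n+1-i}) = \pm 1$ with alternating signs, and in this basis $A_0$ is the shift matrix with $1$'s on the superdiagonal; unwinding the definition of $U_{F_0}$ shows that it consists precisely of those matrices in $\End_{\Sp}(V)$ that are strictly upper triangular. The goal is therefore to prove that every $B \in L$ is strictly upper triangular, i.e.\ that $k_{\min}(B) := \min\{j - i : B_{ij} \neq 0\} \geq 1$.

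The key tool is a degeneration argument. Conjugation by the diagonal torus $T(\nu) = \mathrm{diag}(1, \nu, \nu^2, \ldots, \nu^{2n-1})$ multiplies $E_{ij}$ by $\nu^{i-j}$ and $A_0$ uniformly by $\nu^{-1}$, so after rescaling by $\nu$ we find (still nilpotent) $A_0 + t \sum_k \nu^{1-k} B^{(k)}$ for all $t, \nu$, where $B^{(k)}$ is the restriction of $B$ to the $k$-th diagonal. Substituting $t = s \nu^{k_{\min}-1}$ and letting $\nu \to \infty$, only the $k = k_{\min}$ term survives and closedness of $\mathcal{N}$ yields $A_0 + s B^{(k_{\min})} \in \mathcal{N}$ for all $s \in \CC$. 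To rule out $k_{\min} \leq 0$: if $k_{\min} = 0$, then $B^{(0)}$ is diagonal, so $A_0 + sB^{(0)}$ is upper triangular with eigenvalues $sB^{(0)}_{ii}$, and nilpotency for all $s$ forces $B^{(0)} = 0$. If $k_{\min} < 0$, then $A_0 + sB^{(k_{\min})}$ is a band matrix supported on only the $+1$ and $k_{\min}$ diagonals; by Lemma \ref{lem:N}(2), the traces $\tr((A_0 + sB^{(k_{\min})})^{2r})$ for $r = 1, \ldots, n$ must vanish identically in $s$, and an explicit computation shows these are nontrivial polynomials in the entries of $B^{(k_{\min})}$, again forcing $B^{(k_{\min})} = 0$. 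Hence $k_{\min} \geq 1$, i.e.\ $B \in U_{F_0}$, and so $L \subset U_{F_0}$.

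The main obstacle is the case $k_{\min} < 0$: one must carry out the combinatorial analysis of the closed walks in the two-diagonal band matrix $A_0 + sB^{(k_{\min})}$ that contribute to $\tr((A_0 + sB^{(k_{\min})})^{2r})$ and, exploiting the $\End_{\Sp}$-relations that link the entries of $B^{(k_{\min})}$ via the pairings $\omega(v_i, v_{2n+1-i})$, verify that no conspiracies can make all the resulting polynomials in $s$ vanish identically unless $B^{(k_{\min})} = 0$. A conceptually cleaner alternative is to embed $A_0$ in a Jacobson--Morozov $\mathfrak{sl}_2$-triple $\{A_0, h, f\}$ inside $\End_{\Sp}(V)$, decompose $\End_{\Sp}(V)$ by $h$-weight, and argue via standard $\mathfrak{sl}_2$-representation theory that no non-positive $h$-weight component of $B$ can appear --- for instance, by showing that a nonzero lowest-weight summand of $A_0 + sB$ would produce a nonvanishing Chevalley invariant for generic $s$, contradicting nilpotency.
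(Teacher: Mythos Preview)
Your degeneration is correct: conjugating $A_0+tB$ by $T(\nu)$ and rescaling does produce $A_0+sB^{(k_{\min})}\in\mathcal N$ for all $s$. The gap is in the next step. You claim that for $k_{\min}<0$ the traces $\tr\big((A_0+sB^{(k_{\min})})^{2r}\big)$ are ``nontrivial polynomials in the entries of $B^{(k_{\min})}$'' whose vanishing forces $B^{(k_{\min})}=0$. This is false already for $n=2$. In your Jordan basis (with $\omega(v_i,v_{5-i})=(-1)^{i-1}$ and $A_0=E_{12}+E_{23}+E_{34}$), the element
\[
C\;=\;E_{31}-E_{42}
\]
lies in $\End_{\Sp}(V)=\mathfrak{sp}_4$, sits entirely on the diagonal $j-i=-2$, and satisfies $(A_0+sC)^4=0$ for every $s\in\CC$; yet $C\notin U_{F_0}$. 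The same phenomenon occurs for your alternative $\mathfrak{sl}_2$ suggestion: the $h$-weight decomposition is exactly the diagonal grading, so ``lowest $h$-weight component'' again means a single diagonal, and the Kostant section argument applies only to vectors annihilated by $\mathrm{ad}(f)$, which $B^{(k_{\min})}$ has no reason to be.

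The underlying problem is structural: your argument uses only that the \emph{line} through $A_0$ and $B$ lies in $\mathcal N$, never the hypothesis $\dim L=n^2$. But the implication ``$A_0+tB\in\mathcal N$ for all $t$ with $A_0$ regular nilpotent $\Rightarrow B\in U_{F_0}$'' is simply false in $\mathfrak{sp}_{2n}$, as the example above shows. The paper's proof uses the full dimension in an essential way: via the trace pairing it shows (Step~2) that $L$ equals its own orthogonal inside the Borel determined by any $A\in L\cap\mathcal N^{sm}$, and from this extracts nonlinear closure properties of $L$ (e.g.\ $A\in L\Rightarrow A^{2i-1}\in L$ and $A,B\in L\Rightarrow A^2B+BA^2+ABA\in L$). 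These extra identities, not available from a single line in $\mathcal N$, are what eventually force $r(B)<0$. To repair your approach you would need, at minimum, to degenerate the whole subspace $L$ (say as a limit in the Grassmannian under $T(\nu)$-conjugation) rather than one element at a time, and then analyse the resulting $T$-stable $n^2$-dimensional subspace of $\mathcal N$; this is a substantially different argument from what you wrote.
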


\begin{proof}
We shall divide the proof in several steps.

\noindent \textbf{Step 1.} $\tr (A^i B)=0$, for any $A,B\in L$, $i\geq 0$.
If $i$ is even, then $A^iB$ is an anti-symmetric endomorphism, hence of zero trace.
For $i$ odd, note that $\tr(C^{2j})=0$ for any $C\in {\mathcal N}$, $j\geq 0$. Then considering
that $C_\lambda=A+\lambda B\in L\subset {\mathcal N}$, we have that
$\tr(C_\lambda^{i+1})=\tr ((A+\lambda B)^{i+1})=0$. Take the coefficient of
$\lambda$ to get $\tr (A^i B)=0$.
\medskip

\noindent \textbf{Step 2.} Let $A\in {\mathcal N}$. Then $A\in L$ if and only if $\tr(AB)=0$, for
all $B\in L$.

 The ``only if'' part follows from Step 1.

To prove the ``if'' part, suppose that $\tr (AB)=0$, for all $B\in L$.
 As $A\in {\mathcal N}$, we can choose a flag so that $A \in U$ (this is unique if $A\in {\mathcal N}^{sm}$).
 Taking an appropriate symplectic basis, the flag is the one in
 (\ref{eqn:f0}), and there is a well-defined space $U$ of symplectic
symmetric matrices (see \eqref{eqn:jordan})
 preserving the flag.

 Denote $U^T=\{B^T \, \mid \, B\in U\}\subset \End_{\Sp}(V)$. Consider
also
 the space $D\subset \End_{\Sp}(V)$ consisting of symmetric symplectic
 diagonal matrices. Therefore
$$
\End_{\Sp}(V)= U \oplus D \oplus U^T\, .
$$
 The bilinear map $q(B_1,B_2)=\tr(B_1B_2)$ is symmetric and non-degenerate.
 The $q$-dual of $U$ is $U+D$. As $\End_{\Sp}(V)/(U+D)=U^T$,
 there is an induced perfect pairing $q:U\times U^T \too {\mathbb C}$.
 Let $p=p_{U^T}: \End_{\Sp}(V) \too U^T$ be the projection. Now $L\cap
(U+D) =L\cap U$,
 since all elements of $L$ are nilpotent. Let us see that $L\cap U$ is $q$-dual to $p(L)$.
 Clearly, $q(B,C_1)=\tr(BC_1)=\tr(BC)=0$, for $B\in L\cap U$, $C_1\in
p(L)$ and $C=C_1+C_2\in L$, with $C_2\in U+D$.
 On the other hand,
$$
\dim (L\cap U) + \dim p(L) =\dim L=\dim U\, .
$$
 Therefore, $p(L) \subset U^T$ and $L\cap U \subset U$
 are $q$-orthogonal complements.

 The conclusion is that given $B\in U$, we have
 $B\in L\cap U$ if and only if $q(B,C_1)=0$ for all $C_1\in p(L)$.
 In short, $B\in L\cap U$ if and only if $ \tr(BC)=0$, for all $C\in L$.

\medskip
\noindent \textbf{Step 3.} If $A\in L$ then $A^{2i-1}\in L$ for any $i\geq 1$.
 Without loss of generality, we can suppose $A\in {\mathcal N}^{sm}$.
Therefore, $A$ determines a
 flag $F$, and a space $U=U_F$ of endomorphisms preserving the flag.
 By Step 1, $\tr(A^{2i-1}B)=0$, for any $B\in L$. By Step 2,
we have $A^{2i-1}\in L$.
\medskip

 \noindent \textbf{Step 4.} Let $A,B\in L$, then $A^2B+BA^2+ABA\in L$.
 Let $C_\lambda=A+\lambda B\in L$. Then $C_\lambda^3=(A+\lambda B)^3
 \in L$ by Step 3. Take the coefficient of $\lambda$, to conclude the
statement.
\medskip

\noindent \textbf{Step 5.} Let $A\in L \cap {\mathcal N}^{sm}$.
Let $F$ be the (isotropic) flag determined by $A$, and let $U=U_F$.
Denote as $0\subset V_1\subset V_2 \subset \cdots \subset V_{2n}=V$ the flag $F$.
For $B\in L$, let $r(B)\in {\mathbb Z}$ be the minimum integer such that
$B(V_i)\subset V_{i+r}$. (Note that if $r(B)<0$ is equivalent to $B\in U$.)
We claim that either $r(B)<0$ or $r(B)$ is odd.

Let $r=r(B)$. If $r=0$, then $B\in U+D$. As $B$ is nilpotent,
it follows that $B\,\in\, U$, meaning $r<0$. Now we work by
induction on $r$.
Suppose that $r>0$ and it is even.
Write $B=(b_{ij})$, in some basis adapted to the flag, 
and note that $b_{ij}=0$ for $i-j>r$. Let
$b_i=b_{r+i,i}$, $i=1,\ldots, 2n-r$.
By Step 4, we have $C=A^2B+BA^2+ABA\in L$. It is easy to see that
$r(C)\leq r-2$, and
that it has coefficients
 \begin{eqnarray*}
 && c_1=b_1, c_2=b_1+b_2, c_3=b_1+b_2+b_3 , \ldots, c_i=b_{i-2}+b_{i-1}+b_i, \\
 && \ldots, c_{2n-r+1}=b_{2n-r-1} +b_{2n-r},c_{2n-r+2}=b_{2n-r}\, .
 \end{eqnarray*}
 By induction hypothesis, $r(C)\neq r-2$, so $r(C)<r-2$ and all $c_i=0$. From here,
 $b_i=0$ for all $i$, and so $r(B)<r$.

\medskip

\noindent \textbf{Step 6.} With the notation as in Step 5, $r=r(B)>1$.
Suppose that $r=1$. Let $\{v_i\}$ be a basis adapted to the flag, i.e.
$V_i=\langle v_1,\ldots, v_i\rangle$,
for all $i=0,\ldots, 2n$, and consider the coefficients $b_i:=b_{i+1,i}$,
$i=1, \ldots, 2n-1$, not all equal to zero.

Suppose first that all $b_i\neq 0$. Then $B$ has rank $2n-1$, so $\ker B$ is $1$-dimensional.
Actually, if $v$ spans $\ker B$, then $v\notin V_{2n-1}$. So choose the basis so that
$v_{2n}=v$ and $v_k=A(v_{k+1})$, $k=1,\ldots, 2n-1$. Therefore $A$ has standard Jordan form
and $b_{j,2n}=0$, all $j$.
So $\det(B+\lambda A)=b_{2n-1}\lambda \det(B'+\lambda A')$, where $A', B'$ are $(2n-2)\times
(2n-2)$-matrices obtained from $A,B$ by removing the last two columns and rows. By induction, this determinant is
non-zero. Therefore $A+\lambda B$ is not nilpotent for generic value of $\lambda$. This is a contradiction,
since $A+\lambda B\in L\subset {\mathcal N}$.

Now suppose that $b_{i_0}=0$, $b_{i_0+2k}=0$, but $b_{i_0+1},\ldots, b_{i_0+2k-1}\neq 0$. Then take
the blocks formed by rows and columns $i_0+1,\ldots, i_0+2k$. This produces
matrices $A',B'$ of even size, such that $A'+\lambda B'$ is nilpotent for all $\lambda$. But
$\det(A'+\lambda B')\neq 0$, which is proved as before.

The next case is that $b_{i_0}=0$, $b_{i_0+2k+1}=0$, but $b_{i_0+1},\ldots, b_{i_0+2k}\neq 0$.
Choose one such possibility with the smallest possible value of $k$.
Let $W\subset {\mathbb C}^{2n-1}$ be the subspace parametrizing vectors $(b_i)$ arising from matrices
$B\in L$ with $r(B)=1$. And let $W_{i_0,2k+1}$ be the subspace of those vectors $(b_{i_0+1},\ldots, b_{i_0+2k})$
where $(b_i)\in W$, $b_{i_0}=0$, $b_{i_0+2k+1}=0$. If this has dimension $\geq 2$, then there is a vector
with some coordinate zero. Therefore, there is a smaller $k$. So $\dim W_{i_0,2k+1}=1$.
Step 4 implies that if $(b_{i_0+1},\ldots, b_{i_0+2k})\in W_{i_0,2k+1}$ then
 \begin{eqnarray*}
 && (b_{i_0+1}(b_{i_0+1}+b_{i_0+2}),b_{i_0+2}(b_{i_0+1}+b_{i_0+2}+b_{i_0+3}), \ldots, \\ && \qquad
 b_{i_0+2k-1}(b_{i_0+2k-2}+b_{i_0+2k-1}+b_{i_0+2k}),b_{i_0+2k}(b_{i_0+2k-1}+b_{i_0+2k}) )
 \in W_{i_0,2k+1}\, .
 \end{eqnarray*}
As this vector is a multiple of the previous one, it must be
 $$
 b_{i_0+1}+b_{i_0+2}= b_{i_0+1}+b_{i_0+2}+ b_{i_0+3}= \ldots =b_{i_0+2k-1}+b_{i_0+2k}.
 $$
This implies the vanishing of all $b_i$ unless $k=1$.
And moreover, if $k=1$, then taking the $3\times 3$-matrix with rows and
columns $i_0+1, i_0+2,i_0+3$, we get that $b_{i_0+1}=\alpha$,
$b_{i_0+2}=-\alpha$, for some $\alpha\in {\mathbb C}$, by using that
$B'+\lambda A'$ should be nilpotent.

So the elements of $W$ are of the form $(\ldots, 0, \alpha_1, -\alpha_1, 0, \ldots,
0, \alpha_2, -\alpha_2, 0,\ldots)$.

Now consider the space $H^T:=\{ B \in U^T \,\mid\, r(B)\leq 2\}$. The
dual of $H^T$ under $q$
is denoted $Z \subset U$, and let $H:=U / Z$, with projection $p_H: U\too
H$.
So there is a perfect pairing $q:H^T \times H \too {\mathbb C}$.
Recall that Step 2 says that $U\cap L$ is $q$-dual to $p_{U^T}(L)$. Therefore
$p_{U^T}(L) \cap H^T$ and $p_H(U\cap L)$ are $q$-dual.
Now consider $B_2\in p_{U^T}(L) \cap W^T$. Then
$B=B_1+B_2\in L$, where $B_2\in U+C$, and $r(B)\leq 2$. By Step 5, $r(B)\leq 1$.
By the previous discussion, if $r(B)=1$, then the entries $(b_i)\in W$ have the form
given above.

So $p_H(U\cap L)$ contains matrices with any values in the second diagonal,
and with values in the first diagonal of the form $(\ldots, *, \beta_1, \beta_1, *, \ldots,
*, \beta_2, \beta_2, *,\ldots)$.

Now just consider a matrix $C\in U\bigcap L$
with all zeroes on the first diagonal, and just one $1$
in the second diagonal, in a position $(i_0,i_0+2)$, so that the $3\times 3$-block
coming from $B+\lambda A+C\in L$ has a $1$ in the right-top corner. This matrix is not
nilpotent. This is a contradiction.
\medskip

\noindent \textbf{Step 7.} $L=U$. Fix some $A\in L\cap {\mathcal N}^{sm}$,
and the corresponding subspace $U$. Let $B\in L$, and let $r=r(B)$. We only
have to see that $r<0$. If $r\geq 0$, then $r$ is odd from Step 5. By Step 6,
it cannot be $r=1$. The same argument as in Step 5, proves that it cannot
be $r>1$ and odd. So $B\in U$. Hence $L\subset U$, so they are equal by dimensionality.
\end{proof}

\begin{remark}
{\rm Lemma \ref{lem:agotador} also follows from the main theorem
in \cite{DKK}. The above proof, which uses only elementary
methods, is included in order to be self-contained.}
\end{remark}

\section{Automorphisms of the moduli space}

In this section we will compute the automorphism group of a moduli
spaces of symplectic bundles on $X$. As before, we assume that
$g\geq 3$.

\begin{proposition} \label{prop:lll}
Fix a generic stable bundle $E\in M_{\Sp}(L)$, and consider the map
 $$
 h_{2n}: H^0(\End_{\Sp} E\otimes K_X) \too W_{2n} \, ,
 $$
given as composition of the Hitchin map on $H^0(\End_{\Sp} E\otimes K_X)=T^*_EM_{\Sp}(L)\subset T^*M_{\Sp}(L)$,
followed by projection $\pi_{2n}:W\to W_{2n}$ (see Remark \ref{rem:2011-1}).
Then
 $$
 H^0(\End_{\Sp} E\otimes K_X(-x_0))
$$
$$
= \big\{ \psi \in H^0(\End_{\Sp} E\otimes K_X) \, \mid \,
h_{2n}(\psi+\phi)
 \in {\mathcal H}_{x_0}, \,\forall\, \phi\in h_{2n}^{-1}({\mathcal
H}_{x_0}) \big\}\, ,
$$
where ${\mathcal H}_x$ is defined in (\ref{eqn:2011-ja}).
\end{proposition}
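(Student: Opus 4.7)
The plan is to reduce the asserted identity to a pointwise condition at $x_0$, show that every value at $x_0$ is attained by a global section, and conclude by a short linear-algebra argument.

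First, I would identify $h_{2n}(\theta)$. By definition $s_{2n}(\theta)=\tr(\wedge^{2n}\theta)\in H^0(K_X^{2n})$ is the pointwise determinant of the Higgs field $\theta: E\to E\otimes K_X$, and this is precisely $h_{2n}(\theta)$ under the projection $\pi_{2n}$ of Remark \ref{rem:2011-1}. Since ${\mathcal H}_{x_0}=H^0(K_X^{2n}(-x_0))$ is the hyperplane of sections vanishing at $x_0$, the condition $h_{2n}(\phi)\in{\mathcal H}_{x_0}$ reads $\det(\phi(x_0))=0$, which depends only on the value $\phi(x_0)\in(\End_{\Sp} E\otimes K_X)|_{x_0}$. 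Hence the right-hand side of the claimed identity is
$$
\big\{\psi \,:\, \det\!\big((\psi+\phi)(x_0)\big)=0\text{ for every }\phi\text{ with }\det(\phi(x_0))=0\big\}.
$$

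Second, I would show that for a generic stable $E$, the evaluation $\text{ev}_{x_0}: H^0(\End_{\Sp} E\otimes K_X)\to(\End_{\Sp} E\otimes K_X)|_{x_0}$ is surjective; equivalently $H^1(\End_{\Sp} E\otimes K_X(-x_0))=0$. By Serre duality and the self-duality $\End_{\Sp}^\vee E\cong \End_{\Sp} E$ (induced by the symplectic isomorphism $E\cong E^\vee\otimes L$), this equals $H^0(\End_{\Sp} E(x_0))^\vee$, which vanishes by Lemma \ref{lem:asdfg2} with $\ell=1$ (using $g\geq 3$). Consequently, as $\phi$ ranges over $h_{2n}^{-1}({\mathcal H}_{x_0})$, the value $\phi(x_0)$ attains every $B\in(\End_{\Sp} E\otimes K_X)|_{x_0}$ with $\det B=0$. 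After trivializing $K_X$ at $x_0$ and writing $V:=E_{x_0}$, $A:=\psi(x_0)$, the condition becomes the purely fiberwise statement: $\det(A+B)=0$ for every $B\in\End_{\Sp}(V)$ with $\det B=0$.

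Third, this is standard linear algebra. Under the identification $\End_{\Sp}(V)\cong\{\text{symmetric }2n\times 2n\text{ matrices}\}$ given by $\theta=J^{-1}S$ (so $\det\theta=\det S$), the condition says that translation by the symmetric matrix $S_A$ associated with $A$ preserves $\{S:\det S=0\}$. The inclusion $\supseteq$ in the proposition is trivial (if $\psi(x_0)=0$ then $\det((\psi+\phi)(x_0))=\det(\phi(x_0))=0$). For the converse, the singular locus in symmetric matrices is an irreducible reduced hypersurface cut out by the irreducible polynomial $\det$, so $\det(S_A+\cdot)$ vanishing on it forces $\det(S_A+S)=\det S$ identically (comparing top-degree terms in $S$ pins the multiplicative constant to $1$). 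Specializing $S=S_0+tI$ and comparing in $t$, the characteristic polynomials of $-(S_A+S_0)$ and $-S_0$ agree for every symmetric $S_0$; the equality $\tr((S_A+S_0)^2)=\tr(S_0^2)$ gives $\tr(S_A^2)=0$ (at $S_0=0$) and then $\tr(S_A S_0)=0$ for all symmetric $S_0$, which by non-degeneracy of $(X,Y)\mapsto\tr(XY)$ on symmetric matrices forces $S_A=0$, i.e., $\psi(x_0)=0$.

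The main obstacle is the second step: one must appeal to Lemma \ref{lem:asdfg2} (with $\ell=1$) to ensure that the evaluation is surjective, so that every singular $B$ in the fiber is realized by some global $\phi$. Once this is in hand, the remainder of the argument is an elementary computation on the determinantal hypersurface in symmetric matrices.
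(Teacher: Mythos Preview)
Your proof is correct and follows essentially the same route as the paper's: identify $h_{2n}$ with the pointwise determinant, use Lemma~\ref{lem:asdfg2} (with $\ell=1$) and Serre duality to get surjectivity of evaluation at $x_0$, and reduce to the fiberwise linear-algebra statement that if $A\in\End_{\Sp}(V)$ satisfies $\det(A+C)=0$ for all $C\in\End_{\Sp}(V)$ with $\det C=0$, then $A=0$. The only difference is that the paper states this last fact without proof, whereas you supply a clean argument via the irreducibility of the determinantal hypersurface in symmetric matrices and the non-degeneracy of the trace form.
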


\begin{proof}
First, note that the sequence
 $$
 0\too H^0(\End_{\Sp} E\otimes K_X(-x_0)) \too H^0(\End_{\Sp} E\otimes K_X)
\too \End_{\Sp} E\otimes K_X\vert_{x_0} \too 0
 $$
is exact, since $H^1(\End_{\Sp} E\otimes K_X(-x_0))=H^0(\End_{\Sp} E (x_0))^*= 0$, for a {generic} bundle,
by Lemma \ref{lem:asdfg2}.
So the map
 $$
 H^0(\End_{\Sp} E\otimes K_X) \too \End_{\Sp} E\otimes K_X\vert_{x_0}\,
,
 $$
given by $\phi \longmapsto \phi(x_0)$, is surjective.

Note that $h_{2n}(\phi) =\det(\phi) \in W_{2n}=H^0(K_X^{2n})$. So
$$h_{2n}(\phi)\in {\mathcal H}_{x_0}
\iff \det(\phi(x_0))=0\, .$$ The result follows from this easy linear
algebra fact:
if $(V,\omega)$ is a symplectic vector space, and $A\in \End_{\Sp}(V)$ satisfies that $\det(A+C)=0$ for any
$C\in \End_{\Sp}(V)$ with $\det (C)=0$, then $A=0$.
\end{proof}

Proposition \ref{prop:lll} allows to construct the bundle
 $$
 \SE \too X
 $$
whose fiber over $x\in X$ is $\SE_x = H^0(\End_{\Sp} E \otimes K_X(-x))$.
This is a
subbundle of the trivial bundle
 $$
 H^0(\End_{\Sp}E\otimes K_X) \otimes {\mathcal O}_X \longrightarrow X\, ,
 $$
and there is an exact sequence
 \begin{equation}\label{eqn:28aug}
 0 \too \SE \too H^0(\End_{\Sp}E \otimes K_X) \otimes {\mathcal O}_X
\stackrel{\pi}{\longrightarrow}
 \End_{\Sp} E\otimes K_X\too 0 \, .
 \end{equation}
This is exact on the right by Lemma \ref{lem:asdfg2}.
So we recover the bundle
 $$
 \End_{\Sp} E\otimes K_X\too X\, .
 $$

\begin{remark}
{\rm A natural continuation of argument is as follows.
Take an automorphism $\Phi:M_{\Sp}(L)\too M_{\Sp}(L)$ of the
moduli space, let $E\in M_{\Sp}(L)$ be a generic bundle,
and let $E'=\Phi(E)$. Then, by the previous argument,
there is a bundle isomorphism $\End_{\Sp} E \cong \End_{\Sp} E'$. From here 
we would like to deduce that $E'$ is the twist of $E$ by a line bundle. However,
this is not known (at least, to the authors). This would 
amount to proving the generic injectivity of the map
$E\mapsto \End_{\Sp} E$ between the corresponding moduli spaces.

Because of this, we shall take an alternative route, determining first
the nilpotent symplectic cones bundle, then the isotropic flag varieties bundle,
and from this the Lie algebra structure of $\End_{\Sp} E$. This actually
determines $E$ up to twist by a line bundle.}
\end{remark}

\medskip

We start with the following lemma:

\begin{lemma} \label{lem:2011-2}
The linear structure of the base of the Hitchin map $W$ is uniquely
determined. So is the decomposition $W=\bigoplus_{k=1}^n W_{2k}$, where
$W_{2k}=H^0(K_X^{2k})$, $k=1,\ldots,n$.
\end{lemma}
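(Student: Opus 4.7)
The plan is to extract both the vector space structure on $W$ and its weight decomposition from the abstract $\CC^*$-variety structure on $W$ (Lemma \ref{lem:4.1}) together with the intrinsic Hitchin discriminant.

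First I would linearize the $\CC^*$-action. By Lemma \ref{lem:4.1}, $W$ is recovered as an affine variety with a $\CC^*$-action, all of whose weights are strictly positive; so the origin is the unique $\CC^*$-fixed point and all of $W$ contracts to it as $\lambda \to 0$. A Bialynicki-Birula linearization furnishes a $\CC^*$-equivariant isomorphism $W \cong T_0 W$, and the tangent space $T_0 W$ is canonically a graded vector space, decomposing into weight-isotypic pieces
$$
T_0 W \,=\, \bigoplus_{k=1}^n W_{2k}.
$$
This already provides a vector space structure on $W$ together with the canonical filtration $W_{\geq 2k} = \{w \in W : \lim_{\lambda \to 0} \lambda^{-2k+1}(\lambda \cdot w) = 0\}$, whose successive quotients recover the $W_{2k}$ abstractly as $\CC^*$-modules.

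Next I would promote the graded pieces to a canonical direct-sum decomposition by descending induction on $k$. The top piece $W_{2n} = W_{\geq 2n}$ is already intrinsic. For the next one I invoke Remark \ref{rem:2011-1}: the component $\mathcal{D}_1$ of the Hitchin discriminant is distinguished from $\mathcal{D}_2$ by examining their tangent cones at the origin, and the rational map $\mathcal{D}_1 \dashrightarrow X$ has connected rational fibers whose common intersection recovers a canonical subspace $W' \subset W$ of codimension $\dim W_{2n}$; combined with the $\CC^*$-weight data this gives the canonical splitting $W = W' \oplus W_{2n}$. Iterating inside $W'$, which is again an affine space with a positive $\CC^*$-action (now with weights $2, 4, \ldots, 2(n-1)$), I would peel off $W_{2(n-1)}$ using the analogous Hitchin-discriminant data restricted to $W'$, and continue until all $W_{2k}$ are canonically identified; the $\CC^*$-equivariance of the Hitchin map then forces $W_{2k} = H^0(K_X^{2k})$.

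The main obstacle I expect is justifying the inductive step: while the top-weight splitting is given cleanly by Remark \ref{rem:2011-1}, one must exhibit an analogous rigidification inside each $W'$ obtained after peeling off a top piece. I would handle this by checking that, under the inductive identification, the restriction $\mathcal{D}_1 \cap W'$ (together with suitable components of $\mathcal{D}_2 \cap W'$) continues to supply a tangent-cone analysis inside $T_0 W'$ that singles out the next weight subspace in the same way. A cleaner alternative would be to linearize $W$ globally by Bialynicki-Birula and argue that the extra constraint coming from $\mathcal{D}_1$ kills the ambiguity of the linearization precisely along the non-top weight subspaces, thereby producing all the $W_{2k}$ in one go.
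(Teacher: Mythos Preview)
Your approach is genuinely different from the paper's, and the inductive step you flag as an obstacle is in fact a real gap.

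The paper does \emph{not} attempt to read the full decomposition off of $W$ together with the discriminant alone. Instead it exploits the fibration $T^*M_{\Sp}(L)\to M_{\Sp}(L)$: for a generic $E$ and $E'=\Phi(E)$, Proposition~\ref{prop:lll} shows that $d\Phi$ carries each subspace $H^0(\End_{\Sp}E\otimes K_X(-x))$ to $H^0(\End_{\Sp}E'\otimes K_X(-x))$, hence via the exact sequence~(\ref{eqn:28aug}) descends to a bundle isomorphism $\varphi:\End_{\Sp}E\to\End_{\Sp}E'$. The Hitchin map on $T^*_E M_{\Sp}(L)$ factors as
\[
H^0(\End_{\Sp}E\otimes K_X)\;\longrightarrow\;\prod_{k=1}^n H^0\bigl(\textstyle\bigwedge^{2k}\End_{\Sp}E\otimes K_X^{2k}\bigr)\;\xrightarrow{\ \tr\ }\;\prod_{k=1}^n W_{2k},
\]
and because $d\Phi$ is induced by the bundle map $\varphi$, it is automatically diagonal with respect to this product. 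Hence $f:W\to W$ respects each $W_{2k}$ and, having a single weight there, is linear. The cost is invoking Proposition~\ref{prop:lll} and Lemma~\ref{lem:asdfg2}; the payoff is that the argument simultaneously produces the bundle map $\varphi$, which is exactly what is needed in the proof of Theorem~\ref{thm:autom}.

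Your induction, by contrast, breaks at the first step after Remark~\ref{rem:2011-1}. Once you peel off $W_{2n}$ and restrict to $W'=\{s_{2n}=0\}$, the spectral curve becomes $y^2\bigl(y^{2n-2}+s_2y^{2n-4}+\cdots+s_{2n-2}\bigr)=0$, which is \emph{always} singular along $y=0$; thus $W'\subset\mathcal D_1$ and $\mathcal D_1\cap W'=W'$ carries no information. Your fallback to ``suitable components of $\mathcal D_2\cap W'$'' is the right instinct---one component is indeed $\bigcup_x\bigl(W''\times H^0(K_X^{2n-2}(-2x))\bigr)$---but you would still need to (i) distinguish $\mathcal D_2$ from $\mathcal D_1$ globally, (ii) identify the correct irreducible component of $\mathcal D_2\cap W'$ among several (the reducible curve $P(z,x)=z\cdot Q(z,x)$ has nodes at every zero of $s_{2n-2}$, so generically $W'\subset\overline{\mathcal D_2}$ as well), and (iii) iterate. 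None of this is carried out, and the ``cleaner alternative'' via Bialynicki--Birula does not help: $\CC^*$-equivariant automorphisms of $\bigoplus_k W_{2k}$ with these weights form a large unipotent group, and $\mathcal D_1$ alone only rigidifies the top-weight direction, exactly as in Remark~\ref{rem:2011-1}.
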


\begin{proof}
 Let $M_{\Sp}(L)$ be the moduli space of symplectic bundles, and let
 $\Phi:M_{\Sp}(L)\too M_{\Sp}(L)$ be an automorphism.
 As in the proof of Theorem \ref{thm:torelli}, the automorphism $\Phi$
 yields an isomorphism $\sigma:X\too X$ and commutative diagrams
 \begin{equation}\label{eqn:2011-3}
 \begin{array}{ccccccc}
 \widetilde{\mathcal C} & {\too} &
\widetilde{\mathcal C} & \qquad \text{and} \qquad &
 \widetilde{\mathcal H} & {\too} &
\widetilde{\mathcal H} \\
 \Big\downarrow & & \Big\downarrow &&
 \Big\downarrow & & \Big\downarrow \\ X & \stackrel{\sigma}{\too}
& X
&&
X & \stackrel{\sigma}{\too} & X
 \end{array}
 \end{equation}
Let $M$ be a line bundle such that $M^{\otimes 2}\cong L\otimes (\sigma^*L)^\vee$.
 Composing with the automorphism given by $\sigma^{-1}$ and $M^{-1}$
(see the introduction),
we may assume that $\sigma=Id$.
 Take a \emph{generic} bundle $E$,
 and let $E'$ be its image by $\Phi$. Then we have
 \begin{equation}\label{eqn:2011-4}
  \begin{array}{ccc}
  T^*_EM_{\Sp}(L) & \stackrel{d\Phi}{\too} & T^*_{E'}M_{\Sp}(L) \\
  h \Big\downarrow \  & & h \Big\downarrow \ \\
  W & \stackrel{f}{\too} & W
  \end{array}
 \end{equation}
where $f$ is an automorphism which commutes with
the ${\mathbb C}^*$-action. To prove the lemma, we have to check
that $f$ is linear with respect to a chosen decomposition $W=\oplus_{k=1}^n W_{2k}$.

There is a bundle $\SE \too X$ whose fiber
over any $x\in X$ is the subspace
 $$
 H^0(\End_{\Sp} E\otimes K_X(-x))\subset T_E^*M_{\Sp}(L)\, .
 $$
Analogously, there
is a bundle $\SE'\too X$ whose fiber over $x$ is
the subspace
 $$H^0(\End_{\Sp} E'\otimes K_X(-x))\subset T_{E'}^*M_{\Sp}(L)\, .$$
By Proposition \ref{prop:lll},
the map $d\Phi: T_E^*M_{\Sp}(L) \too T_{E'}^*M_{\Sp}(L)$ gives an
isomorphism $\SE \too \SE'$.
Going to the quotient bundle (\ref{eqn:28aug}), we have a bundle isomorphism
 $$
 \End_{\Sp} E\otimes K_X \longrightarrow \End_{\Sp} E'\otimes K_X\, .
 $$
Let
 $$
 \varphi\,:\, \End_{\Sp} E \,\longrightarrow \,\End_{\Sp} E'
 $$
be the corresponding isomorphism. Then
the map 
 $$
 d\Phi: H^0(\End_{\Sp} E\otimes K_X) \longrightarrow
 H^0(\End_{\Sp} E' \otimes K_X)$$
is induced by $\varphi$.

The Hitchin map $h:H^0(\End_{\Sp} E\otimes K_X) \longrightarrow W$
factors as the composition of the map
 $$
 H^0(\End_{\Sp} E\otimes K_X) \longrightarrow \prod_{k=1}^n
 H^0(\bigwedge\nolimits^{2k} \End_{\Sp} E \otimes K^{2k}_X)
 $$
and the trace map
 $$
 \prod_{k=1}^n
 H^0(\bigwedge\nolimits^{2k} \End_{\Sp} E \otimes K^{2k}_X) \longrightarrow
 \prod_{k=1}^n W_{2k}\, .
 $$

By the discussion above, the map $d\Phi$ descends to a map
 $$\prod_{k=1}^n
 H^0(\bigwedge\nolimits^{2k} \End_{\Sp} E \otimes K^{2k}_X) \longrightarrow
 \prod_{k=1}^n
 H^0(\bigwedge\nolimits^{2k} \End_{\Sp} E' \otimes K^{2k}_X)
 $$ 
and is diagonal
 with respect to the product decomposition. Therefore, the map
 $f:\prod_{k=1}^n W_{2k} \to \prod_{k=1}^n W_{2k}$ is also
 diagonal.
 Finally, this means that we have maps $f:W_{2k}\to W_{2k}$ which
 are $\CC^*$-equivariant. As there is just one weight for $W_{2k}$,
 this means that $f$ is linear on $W_{2k}$. Therefore $f$ is linear.
\end{proof}

Our next step is to recover the symplectic nilpotent cone bundle
 $$
 {\mathcal N}_E \too X \, ,
 $$
whose fibers are the symplectic nilpotent cone spaces
 $$
 {\mathcal N}_{E,x} =\{ A \in \End_{\Sp}E\otimes K_X\vert_{x} \, \,
\mid \,\, A^{2n}=0\}
\subset \End_{\Sp}E\otimes K_X\vert_{x},  \ x \in X\, .
 $$
Note that this nilpotent cone bundle sits as
${\mathcal N}_E \subset \End_{\Sp} E\otimes K_X$.

\begin{lemma} \label{lem:last-step}
 Consider the map
 $$
 h_{2k}: H^0(\End_{\Sp} E\otimes K_X) \too W_{2k} \, ,
 $$
given as composition of the Hitchin map on $H^0(\End_{\Sp} E\otimes K_X)=T^*_EM_{\Sp}(L)\subset T^*M_{\Sp}(L)$,
followed by the projection $W\to W_{2k}$ (defined thanks to Lemma
\ref{lem:2011-2}).
Then the vector subspace generated by the image $h_{2k}(H^0(\End_{\Sp} E\otimes K_X(-x)))$
is $$H^0(K_X^{2k}(-2k\, x)) \subset W_{2k}= H^0(K_X^{2k})\, .$$
\end{lemma}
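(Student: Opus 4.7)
The plan is to establish the inclusion $h_{2k}(U(-x))\subseteq H^0(K_X^{2k}(-2kx))$ by a local Taylor computation, and then to deduce the reverse inclusion (spanning) by an induction on the order of vanishing at $x$ that exploits the freedom to prescribe Taylor data of $\theta$ afforded by Lemma \ref{lem:asdfg2}.

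Write $U(-x):=H^0(\End_{\Sp}E\otimes K_X(-x))$. Fix a local coordinate $t$ at $x$ and a trivialization of $\End_{\Sp}E\otimes K_X$ near $x$, so any $\theta\in U(-x)$ has the form $\theta=t\bar\theta$ with $\bar\theta$ a local section of $\End_{\Sp}E\otimes K_X$. Then $\wedge^{2k}\theta=t^{2k}\wedge^{2k}\bar\theta$ and hence $h_{2k}(\theta)=\tr(\wedge^{2k}\theta)$ vanishes to order at least $2k$ at $x$, i.e., lies in $H^0(K_X^{2k}(-2kx))$.

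For the reverse inclusion, let $W\subseteq V:=H^0(K_X^{2k}(-2kx))$ denote the linear span of $h_{2k}(U(-x))$, and set $V_j:=H^0(K_X^{2k}(-(2k+j)x))$, so $V_0=V$ and $V_j=0$ for $j$ large enough. I would prove $W+V_j=V$ by induction on $j$. The base case $j=0$ is tautological, and an elementary diagram chase reduces the inductive step to checking that $W\cap V_j$ surjects onto the one-dimensional quotient $V_j/V_{j+1}$, i.e.\ that some $\CC$-linear combination of elements $h_{2k}(\theta_\alpha)$ with $\theta_\alpha\in U(-x)$ has order of vanishing at $x$ equal to exactly $2k+j$.

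Such combinations are produced as follows. By Lemma \ref{lem:asdfg2} applied to small effective divisors supported at $x$, and for generic $E$, the jet maps $U(-x)\to H^0((\End_{\Sp}E\otimes K_X(-x))|_{Nx})$ are surjective for the relevant $N$, so the Taylor coefficients $A_1,A_2,\ldots\in\End_{\Sp}(E_x)$ of $\theta\in U(-x)$ at $x$ can be prescribed independently. Locally $\theta=t(A_1+tA_2+\cdots)\,dt$, and the $r$-th Taylor coefficient $P_r$ of $h_{2k}(\theta)/t^{2k}$ at $x$ is a polynomial in $A_1,\ldots,A_{r+1}$ that is \emph{linear in $A_{r+1}$}, with leading coefficient the polarization of $A\mapsto\tr(\wedge^{2k}A)$ evaluated at $(A_1,\ldots,A_r)$. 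This polarized multilinear form is non-degenerate on $\End_{\Sp}(E_x)$ for generic $A_1$ (verifiable by reduction to a diagonal $A_1$ with distinct eigenvalues), so the polynomial map $(A_1,A_2,\ldots)\mapsto(P_0,P_1,\ldots,P_j)$ has surjective differential at a generic point; its image is therefore Zariski dense in $\CC^{j+1}$ and its $\CC$-linear span is all of $\CC^{j+1}$. Appropriate linear combinations $\sum_\alpha c_\alpha h_{2k}(\theta_\alpha)$ then realize every prescribed Taylor profile at $x$, yielding elements of $W\cap V_j$ that project non-trivially to $V_j/V_{j+1}$, completing the induction. The main obstacle is the non-degeneracy statement for the polarized trace form on the space of symmetric symplectic endomorphisms, which I would handle by the explicit diagonalization indicated above.
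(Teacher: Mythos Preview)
Your forward inclusion (via the local factorization $\theta=t\bar\theta$) is correct and coincides with the paper's first step.

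For the reverse inclusion, the paper takes a different and shorter route. It observes that the linear span of the image of the degree-$2k$ polynomial map $h_{2k}$ on $U(-x)$ equals the image of its \emph{polarization}, a multilinear map $U(-x)^{\otimes 2k}\to H^0(K_X^{2k}(-2kx))$ induced by the bundle map
\[
(\End_{\Sp}E)^{\otimes 2k}\longrightarrow \End_{\rm ant}E \stackrel{\tr}{\longrightarrow}\mathcal{O}_X
\]
(composition followed by trace). This bundle map is \emph{split} (a Casimir-type section does the job), so after twisting by $K_X^{2k}(-2kx)$ it remains surjective on $H^0$. No jet prescription is needed.

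Your inductive approach, by contrast, has a genuine gap. The induction on $j$ must run until $V_j=H^0(K_X^{2k}(-(2k+j)x))$ vanishes, which forces $j$ up to roughly $(4k-1)(g-1)$. At each step you need to move $A_{r+1}$ independently of $A_1,\dots,A_r$, i.e.\ you need the jet map
\[
U(-x)\longrightarrow \big(\End_{\Sp}E\otimes K_X(-x)\big)\big|_{(j+1)x}
\]
to be surjective. By Serre duality (using $(\End_{\Sp}E)^\vee\cong\End_{\Sp}E$) this amounts to $H^0(\End_{\Sp}E((j+2)x))=0$, and Lemma~\ref{lem:asdfg2} only supplies this under the hypothesis $g\ge \max\{2(j+2),\,j+4\}$, i.e.\ $j\le g/2-2$. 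For $g=3$ this range is empty, and in every case it falls far short of $(4k-1)(g-1)$. So the ``freedom to prescribe Taylor data'' you invoke is simply not available for the values of $j$ your induction requires, and the lower-triangular argument for surjectivity of the differential breaks down.

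A minor imprecision worth noting: the linear-in-$A_{r+1}$ part of $P_r$ is $Ds_{2k}(A_1)(A_{r+1})$, i.e.\ the polarization evaluated at $(A_1,\dots,A_1,A_{r+1})$ with $A_1$ repeated $2k-1$ times, not at $(A_1,\dots,A_r)$ as you wrote. This does not affect the main issue above.
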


\begin{proof}
 The map $h_{2k}$ sends $(E,\phi)\mapsto \tr (\wedge^{2k}\phi)$.
 Therefore, $$h_{2k}(H^0(\End_{\Sp} E\otimes K_X(-x)))
\, \subset\, H^0(K^{2k}_X(-2k\, x))\, .$$

 Now the vector space generated by $h_{2k}(H^0(\End_{\Sp} E\otimes K_X(-x)))$
 equals to the image of
 $$
 \wedge^{2k}:H^0(\End_{\Sp} E\otimes K_X(-x))^{\otimes 2k} \too
 H^0(\End_{\rm ant} E\otimes K_X^{2k}(-2k\, x)) \, ,
 $$
 followed by the trace map
 $H^0(\End_{\rm ant} E\otimes K_X^{2k}(-2k\, x)) \too H^0(K^{2k}_X(-2k\, x))$,
 where $\End_{\rm ant} E$ consists of the anti-symmetric symplectic endomorphisms of $E$ (i.e., those
 $\varphi$ such that $\omega( \varphi \, u, v)=\omega(u,\varphi \, v)$).
 Note that the multiples of the identity are in $\End_{\rm ant} E$.

 There is a bundle map
 $$
 (\End_{\Sp} E)^{\otimes 2k} \too  \End_{\rm ant} E \too {\mathcal O}_X
 $$
 (first map is composition of endomorphisms, second map is the trace).
 This is split, therefore the map
 $$
 H^0((\End_{\Sp} E)^{\otimes 2k} \otimes K_X^{2k}(-2k\, x)) \too  H^0(K_X^{2k}(-2k\, x))
 $$
 is surjective, as required.
\end{proof}

Consider the vector subspace generated by the image $h_{2k}(H^0(\End_{\Sp} E\otimes K_X(-x)))$
which is $H^0(K^{2k}_X(-2k\, x)) \subset W_{2k}= H^0(K^{2k}_X)$, for
moving the point $x\in X$.
This gives a fiber bundle
over $X$, which has co-rank $2k$. The curve $X$ is embedded
into $\PP (W_{2k}^*)$ via the linear system $\vert K_X^{2k}\vert$, and the
osculating $2k$-space at $x$ is given  as
 $$
 Osc_{2k}(x)=\PP(V_x) \subset \PP (W_{2k}^*),
 \qquad V_x:=\ker (H^0(K^{2k}_X)^* \to H^0(K^{2k}_X(-2k\, x)^*))\, .
 $$
The embedding of the curve $X$ in $\PP(W_{2k}^*)$ is recovered from the osculating $2k$-spaces.
More specifically, if $g:X\to \PP^N$ and $\tilde{g}: X\to \Gr(2k+1,N+1)$ is the
map giving the osculating $2k$-spaces, then $\tilde{g}$ determines $g$. This is
proved as follows: the pull-back of the universal bundle through $\tilde{g}$ is
the bundle
 $$
 \PP ({\mathcal O}_X \oplus TX \oplus (TX)^{\otimes 2} \oplus \ldots \oplus (TX)^{\otimes 2k}) \too X\, ,
 $$
and $g$ is determined by a section of this bundle. As $TX$ is of
negative degree, this has only one section.

So we recover the embeddings $X\hookrightarrow \PP(W_{2k}^*)=\PP (H^0(K^{2k}_X)^*)$, and hence the hyperplanes
 $$
 H_x^{(2k)} := H^0(K^{2k}_X(-x)) \subset W_{2k}\, .
 $$

Finally, consider
 $$
 \{\phi \in H^0(\End_{\Sp}E \otimes K_X) \, \mid \, h_{2k}(\phi)\in
H_x^{(2k)}, \,\forall\, k=1,\ldots, n \}\, .
 $$
This is the pre-image of the nilpotent cone under the surjective map
$$H^0(\End_{\Sp}E \otimes K_X) \too \End_{\Sp}E \otimes K_X\vert_x\, .$$
Take its image to get the bundle
 $$
 {\mathcal N}_E \too X \, .
 $$
\medskip

Now we are ready to prove the main result of the paper. First,
as explained in the introduction, line bundles of order two and automorphisms of $X$ produce
automorphisms of moduli spaces of symplectic bundles.

\begin{theorem} \label{thm:autom}
 Let $M_{\Sp}(L)$ be the moduli space of symplectic bundles. Let
$$\Phi:M_{\Sp}(L)\too M_{\Sp}(L)$$
 be an automorphism.
 Then $\Phi$ is induced by an automorphism of $X$ and a line bundle of order two.
\end{theorem}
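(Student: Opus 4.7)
The plan is to reduce to the case where the induced automorphism of $X$ is trivial, and then recover from $\Phi$ an isomorphism of principal $\operatorname{PSp}(2n)$-bundles that lifts, up to a $2$-torsion line bundle, to a symplectic bundle isomorphism $\Phi(E) \cong E \otimes M$. By Theorem \ref{thm:torelli}, $\Phi$ induces some $\sigma \in \text{Aut}(X)$. Since $L \otimes ((\sigma^{-1})^* L)^{\vee}$ has degree zero, we may choose a line bundle $N$ with $N^{\otimes 2} \cong L \otimes ((\sigma^{-1})^* L)^{\vee}$ and compose $\Phi$ with the automorphism of $M_{\Sp}(L)$ induced by $(\sigma^{-1}, N)$; the composition reduces us to the case $\sigma = \id$. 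It then suffices to show that the modified automorphism is a twist by a line bundle of order two, since this will combine with the $(\sigma^{-1}, N)$ removed at the start to give an automorphism of the form described in the introduction.

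For this reduced $\Phi$, fix a generic $E \in M_{\Sp}(L)$ and set $E' := \Phi(E)$. The material preceding the theorem (the exact sequence \eqref{eqn:28aug}, together with the intrinsic recovery of $W = \bigoplus W_{2k}$ in Lemma \ref{lem:2011-2}, the projections $h_{2k}$, and the hyperplanes $H^{(2k)}_x$) yields a vector bundle isomorphism $\varphi: \End_{\Sp} E \to \End_{\Sp} E'$ carrying the nilpotent cone bundle ${\mathcal N}_E$ to ${\mathcal N}_{E'}$, since ${\mathcal N}_E$ is defined intrinsically from the Hitchin data preserved by $\Phi$. Fiberwise, Lemma \ref{lem:agotador} identifies the isotropic flag variety $Fl(E_x, \omega_{E_x})$ with the space of $n^2$-dimensional linear subspaces of ${\mathcal N}_{E,x}$ meeting its smooth locus. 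Consequently, $\varphi$ induces an isomorphism of isotropic flag variety bundles $Fl(E, \omega_E) \cong Fl(E', \omega_{E'})$ over $X$.

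Since $\Sp(2n)$ (type $C_n$) has no outer automorphisms and its center $\{\pm I\}$ acts trivially on $\Sp(2n)/B$, the full automorphism group of the isotropic flag variety is $\operatorname{PSp}(2n)$. Hence from the flag variety bundle we recover the associated principal $\operatorname{PSp}(2n)$-bundle, giving an isomorphism $P_{\operatorname{PSp}}(E) \cong P_{\operatorname{PSp}}(E')$. From the short exact sequence $1 \to \{\pm I\} \to \Sp(2n) \to \operatorname{PSp}(2n) \to 1$, any two $\Sp(2n)$-lifts of a given $\operatorname{PSp}(2n)$-bundle differ by a class in $H^1(X, \{\pm I\}) = J(X)_2$. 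This yields $E' \cong E \otimes M(E)$ as symplectic bundles, for some $M(E) \in J(X)_2$. Since $M(E)$ varies algebraically in $E$, $J(X)_2$ is finite and discrete, and $M_{\Sp}(L)$ is connected, the class $M(E) = M$ is constant. Therefore $\Phi$ agrees with the automorphism $E \mapsto E \otimes M$ on a Zariski open dense subset, and hence on all of $M_{\Sp}(L)$ by separatedness.

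The main obstacle is this final step: the isomorphism $\varphi$ is a priori only a vector bundle isomorphism, with no guarantee that it preserves the Lie bracket on $\End_{\Sp}(E)$ or intertwines the symplectic structures on $E$ and $E'$. The key is that the nilpotent cone, via Lemma \ref{lem:agotador}, already encodes enough of the isotropic flag structure to extract the underlying principal bundle up to the center; the Kummer-type cohomology argument for $\Sp \to \operatorname{PSp}$ then exactly produces the ambiguity of a line bundle of order two predicted by the statement.
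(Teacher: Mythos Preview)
Your proof follows essentially the same strategy as the paper's: reduce to $\sigma=\id$, produce the isomorphism $\End_{\Sp}E\cong\End_{\Sp}E'$ carrying nilpotent cones to nilpotent cones, pass via Lemma~\ref{lem:agotador} to an isomorphism of isotropic flag bundles, and then descend to the underlying projective bundle, from which a torsor argument gives $E'\cong E\otimes M$. The paper phrases the last step slightly differently---it goes from the flag bundle to the Lie algebra bundle $ad_{\Sp}E$ via global vertical vector fields and then invokes Lemma~\ref{endiso}---but your shortcut through $\operatorname{Aut}(\Sp(2n)/B)=\operatorname{PSp}(2n)$ (Demazure) is an equally valid way to extract the $\operatorname{PSp}$-bundle.

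There is, however, one genuine gap in your lifting step. You invoke the sequence $1\to\{\pm I\}\to\Sp(2n)\to\operatorname{PSp}(2n)\to 1$ and treat $E$ and $E'$ as two $\Sp(2n)$-lifts of the same $\operatorname{PSp}(2n)$-bundle. But the objects of $M_{\Sp}(L)$ are $\Gp(2n)$-bundles, not $\Sp(2n)$-bundles: the symplectic form takes values in $L$, not in $\SO_X$. When $\deg L$ is odd, the associated $\operatorname{PSp}(2n)$-bundle admits \emph{no} lift to $\Sp(2n)$ whatsoever (the obstruction class in $H^2(X,\mu_2)\cong\ZZ/2$ is nonzero), so your $J(X)_2$-torsor statement is vacuous in that case. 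The correct exact sequence is
\[
e\longrightarrow\CC^*\longrightarrow\Gp(2n)\longrightarrow\operatorname{PGp}(2n)=\operatorname{PSp}(2n)\longrightarrow e,
\]
so that $\Gp$-lifts form a $\Pic(X)$-torsor; the extra constraint that both $E$ and $E'$ have the \emph{same} associated line bundle $L$ under the character $q$ then forces $M^{\otimes 2}\cong\SO_X$. This is precisely the content of the paper's Lemma~\ref{endiso}, and with this correction your argument is complete.
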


\begin{proof}
 We work as in Lemma \ref{lem:2011-2}. There is an
 automorphism $\sigma:X\too X$ and a line bundle
 $M$, with $M^{\otimes 2}\cong L\otimes (\sigma^*L)^\vee$,
 such that, after composing with $\sigma^{-1}$ and $M^{-1}$
(as in the introduction), we have the diagram
 (\ref{eqn:2011-3}) with $\sigma=Id$.

 Take a \emph{generic} bundle $E$, and let $E'$ be its image by $\Phi$.
 By the arguments in Lemma \ref{lem:2011-2},
 we have a bundle isomorphism
 $$
 \varphi: \End_{\Sp} E \longrightarrow \End_{\Sp} E'\, ,
 $$
and the diagram (\ref{eqn:2011-4}) becomes
 \begin{equation*}
  \begin{array}{ccc}
  H^0( \End_{\Sp} E\otimes K_X) & \stackrel{d\Phi}{\too} & H^0( \End_{\Sp} E'\otimes K_X) \\
  h \Big\downarrow \  & & h \Big\downarrow \ \\
  W & \stackrel{f}{\too} & W
  \end{array}
 \end{equation*}
where $h$ is the Hitchin map and $f$ is a linear isomorphism.
By Lemma \ref{lem:last-step} and the discussion following it, $f$
preserves  $H_x^{(2k)} = H^0(K^{2k}_X(-x)) \subset W_{2k}$,
for each $k=1,\ldots, n$, and $x\in X$.

Therefore $d\Phi$ preserves $\bigcap_{k\geq 1} h_{2k}^{-1}(H_x^{(2k)}) \subset
 H^0(\End_{\Sp}E \otimes K_X)$. Taking the image under the surjective map
$H^0(\End_{\Sp}E \otimes K_X) \too \End_{\Sp}E \otimes K_X\vert_x$, we
see that
$d\Phi$ preserves the symplectic nilpotent cone ${\mathcal N}_{E,x}$, for all $x\in X$.
So we have an isomorphism
 $$
 \begin{array}{ccc}
 {\mathcal N}_E & {\too} & {\mathcal N}_{E'} \\
 \Big\downarrow & & \Big\downarrow \\
 X & = & X
 \end{array}
 $$
where ${\mathcal N}_E$, ${\mathcal N}_{E'}$ are the corresponding symplectic nilpotent cone bundles.

By Lemma \ref{lem:agotador}, we get an isomorphism
 $$
 \begin{array}{ccc}
 Fl(E,\omega) & {\too} & Fl({E'},\omega') \\
 \Big\downarrow & & \Big\downarrow \\
 X & = & X
 \end{array}
 $$
of the corresponding isotropic flag varieties bundles.
Going to global vertical fields, we have a (Lie algebra) bundle isomorphism
 $$
 \begin{array}{ccc}
 ad_{\Sp}E & {\too} & ad_{\Sp}{E'} \\
 \Big\downarrow & & \Big\downarrow \\
 X & = & X
 \end{array}
 $$
Using Lemma \ref{endiso} below, it follows that $E'\cong E\otimes M$, for some line
bundle $M$ with $M^2\cong \SO_X$.

As this holds for a generic $E$, it holds for all $E$.
\end{proof}

\begin{lemma}
\label{endiso}
Let $(E, E\otimes E\to L)$ and
$(E',E'\otimes E'\to L')$ be two symplectic vector bundles such
that $ad_{\Sp}\, E$ and $ad_{\Sp}\, E'$ are isomorphic
as Lie algebra bundles. Then there is a line
bundle $M$
such that $E'\cong E\otimes M$.

Furthermore, if we assume $L\cong L'$, then $M^{\otimes 2}\cong \SO_X$.
\end{lemma}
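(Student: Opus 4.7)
The plan is to identify $ad_{\Sp}E$ with the adjoint bundle of the principal $\Gp(2n,\CC)$-bundle $P$ attached to $(E,\varphi)$, and then to use the central extension $1\to\CC^*\to\Gp(2n,\CC)\to\mathrm{PSp}(2n,\CC)\to 1$ to lift an isomorphism of adjoint Lie algebra bundles to one of vector bundles, up to tensoring with a line bundle.

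First I would collect the required structural input. The simple Lie algebra $\mathfrak{sp}(2n,\CC)$ (of type $C_n$) has trivial outer automorphism group, so $\mathrm{Aut}(\mathfrak{sp}(2n,\CC))=\mathrm{PSp}(2n,\CC)$. Since $\Sp(2n,\CC)$ acts irreducibly on $\CC^{2n}$, its centralizer in $\GL(2n,\CC)$ is $\CC^*\cdot I$; hence the center of $\Gp(2n,\CC)$ is exactly the subgroup of scalar matrices, and $\Gp(2n,\CC)/Z(\Gp(2n,\CC))\cong\mathrm{PSp}(2n,\CC)$. Writing $P$ and $P'$ for the principal $\Gp(2n,\CC)$-bundles associated to $(E,\varphi)$ and $(E',\varphi')$, one has natural identifications $ad_{\Sp}E\cong P\times_{\Gp}\mathfrak{sp}(2n,\CC)$ and $ad_{\Sp}E'\cong P'\times_{\Gp}\mathfrak{sp}(2n,\CC)$; since the association $\bar Q\mapsto \bar Q\times_{\mathrm{PSp}}\mathfrak{sp}(2n,\CC)$ is a bijection on isomorphism classes between $\mathrm{PSp}(2n,\CC)$-bundles and Lie algebra bundles of fiber type $\mathfrak{sp}(2n,\CC)$, the given Lie algebra bundle isomorphism yields an isomorphism $\overline{P}\cong\overline{P'}$ of the corresponding $\mathrm{PSp}(2n,\CC)$-bundles $\overline{P}=P/\CC^*$ and $\overline{P'}=P'/\CC^*$.

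Next I would apply the central extension
$$1\longrightarrow\CC^*\longrightarrow\Gp(2n,\CC)\longrightarrow\mathrm{PSp}(2n,\CC)\longrightarrow 1.$$
As $P$ and $P'$ are two $\Gp(2n,\CC)$-lifts of the same $\mathrm{PSp}$-bundle, the standard non-abelian cohomology sequence shows that they differ by a class in $H^1(X,\CC^*)=\mathrm{Pic}(X)$: there is a line bundle $M$ on $X$ with $P'\cong P\cdot M$ as principal $\Gp(2n,\CC)$-bundles, where the $\CC^*$-action proceeds through the central embedding $\lambda\mapsto\lambda I$. Applying the standard representation $\Gp(2n,\CC)\hookrightarrow\GL(2n,\CC)$, on which $\lambda I$ acts by multiplication by $\lambda$, this translates into $E'\cong E\otimes M$, proving the first assertion.

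For the ``furthermore'' clause, observe that the similitude character $q:\Gp(2n,\CC)\to\CC^*$ satisfies $q(\lambda I)=\lambda^2$, and the line bundle of symplectic values is the associated line bundle $L=P\times_q\CC$ (and likewise $L'=P'\times_q\CC$). Combined with $P'\cong P\cdot M$, this immediately gives $L'\cong L\otimes M^{\otimes 2}$; so the assumption $L\cong L'$ forces $M^{\otimes 2}\cong\SO_X$. The main obstacle I anticipate is the careful bookkeeping in the lifting step --- making precise that two $\Gp(2n,\CC)$-lifts of a fixed $\mathrm{PSp}(2n,\CC)$-bundle form a $\mathrm{Pic}(X)$-torsor via the central embedding --- but this is a standard consequence of the central extension sequence.
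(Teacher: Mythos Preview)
Your proposal is correct and follows essentially the same route as the paper's proof: both use that $\mathfrak{sp}_{2n}$ has no outer automorphisms to identify the Lie algebra bundle with a principal bundle for $\mathrm{Aut}(\mathfrak{sp}_{2n})=\mathrm{PSp}(2n,\CC)=\mathrm{PGp}_{2n}$, then apply the central extension $1\to\CC^*\to\Gp(2n,\CC)\to\mathrm{PSp}(2n,\CC)\to 1$ to see that the two $\Gp$-lifts differ by an element of $H^1(X,\SO_X^*)=\mathrm{Pic}(X)$, and finally read off $L'\cong L\otimes M^{\otimes 2}$ from the action of scalars under the similitude character. Your version is in fact slightly more explicit about the last step (computing $q(\lambda I)=\lambda^2$) than the paper's.
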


\begin{proof}
Giving a vector bundle $ad_{\Sp}\, E$ with
its Lie algebra structure is equivalent to giving
a principal ${\rm Aut}(\mathfrak{sp}_{2n})$-bundle $P_{{\rm Aut}(\mathfrak{sp}_{2n})}$ which admits
a reduction to a principal ${\rm Gp}_{2n}$-bundle $P_{{\rm Gp}_{2n}}$,
corresponding to $(E,E\otimes E\to L)$.

Since $\mathfrak{sp}_{2n}$ does not have outer automorphisms,
all automorphisms are inner, and ${\rm Aut}(\mathfrak{sp}_{2n})$
is connected. Therefore, we have
$$
{\rm Gp}_{2n} \surj {\rm PGp}_{2n}={\rm Inn}(\mathfrak{sl}_{2n})={\rm Aut}(\mathfrak{sl}_{2n})
$$
Consider the short exact sequence of groups
$$
e \too \CC^* \too {\rm Gp}_{2n} \too {\rm PGp}_{2n} \too e
$$
Hence, the set of reductions of a ${\rm PGp}_{2n}$-bundle
to ${\rm Gp}_{2n}$ is a torsor for the group $H^1(X,\SO_X^*)$.
Therefore, if $(E,E\otimes E\to L)$ is a symplectic
bundle corresponding
to a reduction, the other reductions are
of the form
$$
\big(E\otimes M, (E\otimes M)\otimes (E\otimes M)\too L\otimes M^{\otimes 2}\big)
$$
for any line bundle $M$.

Finally, it follows from this expression that,
if $L\cong L'$, then $M^{\otimes 2}\cong \SO_X$.
\end{proof}

Let $\text{Aut}(X)$ and $\text{Aut}(M_{\Sp}(L))$ be the
automorphisms of $X$ and $M_{\Sp}(L)$ respectively.
Let $J(X)_2$ be the group of line bundles on $X$ of order two.

\begin{proposition}\label{prop1}
There is a natural short exact sequence of groups
$$
e\too J(X)_2 \too {\rm Aut}(M_{\rm Sp}(L)) \too {\rm Aut}(X)
\too e\, .
$$
\end{proposition}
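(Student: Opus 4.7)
The plan is to construct both maps of the sequence explicitly and verify exactness, leveraging Theorem \ref{thm:autom} for the core content.

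First, I would define the map $\iota : J(X)_2 \to \text{Aut}(M_{\Sp}(L))$ by sending $M \in J(X)_2$ to the automorphism $(E,\varphi) \longmapsto (E\otimes M, \varphi\otimes \beta)$ described in the introduction, where $\beta : M^{\otimes 2}\to \SO_X$ is any chosen isomorphism. As remarked there, this automorphism does not depend on $\beta$, and it is clear that $\iota$ is a group homomorphism. For injectivity, I would show that if $M$ is a nontrivial element of $J(X)_2$ then $\iota(M)$ is not the identity: for generic stable symplectic $E$, one checks $\operatorname{Hom}(E, E\otimes M) = H^0(\operatorname{End}(E) \otimes M) = 0$ (a semicontinuity statement, established by exhibiting one example where the vanishing holds, in the spirit of Lemma \ref{lem:asdfg2}), so $E\otimes M \not\cong E$ generically.

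Next, I would define $\rho : \text{Aut}(M_{\Sp}(L)) \to \text{Aut}(X)$ by $\Phi \longmapsto \sigma$, where $\sigma$ is the automorphism of $X$ associated to $\Phi$ in Theorem \ref{thm:autom}. The key point is that $\sigma$ is intrinsically determined by $\Phi$ (not by the auxiliary twist $M$ appearing in the theorem): as in the proof of the Torelli theorem (Theorem \ref{thm:torelli}), $\sigma$ arises from the linear isomorphism $f : W_{2n} \to W_{2n}$ induced by $d\Phi$ on the base of the Hitchin map, together with the fact that $f$ preserves the dual variety $\PP(\SC)$ of the curve $X \subset \PP(W_{2n}^*)$ embedded by $|K_X^{2n}|$. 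Since composition of automorphisms of $M_{\Sp}(L)$ induces composition of the associated linear maps on $W$, and hence of the induced maps on $X$, the map $\rho$ is a group homomorphism.

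For exactness at the middle term, the kernel of $\rho$ consists of those $\Phi$ inducing the identity on $X$, and Theorem \ref{thm:autom} asserts that these are exactly the automorphisms of the form $E\mapsto E\otimes M$ with $M\in J(X)_2$, i.e. the image of $\iota$. For surjectivity of $\rho$, given any $\sigma \in \text{Aut}(X)$, the line bundle $L\otimes (\sigma^*L)^\vee$ has degree zero and therefore lies in $J(X)$; since $J(X)$ is a divisible abelian group, this line bundle admits a square root $M$, and the construction recalled in the introduction then produces an automorphism of $M_{\Sp}(L)$ whose image under $\rho$ is $\sigma$.

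The main delicate point is the well-definedness of $\rho$: one must ensure that $\sigma$ depends only on $\Phi$ and not on any ambiguity in the decomposition of Theorem \ref{thm:autom}. This is already built into the proof, where $\sigma$ is extracted directly from the $\CC^*$-equivariant linear map $f$ induced by $d\Phi$ on the base of the Hitchin map, before any auxiliary twist is chosen. Everything else — injectivity of $\iota$, the homomorphism property of both maps, and surjectivity of $\rho$ — is then essentially formal once Theorem \ref{thm:autom} is in hand.
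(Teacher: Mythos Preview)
Your proposal is correct and follows the same overall architecture as the paper: construct $\rho$ from the Torelli-type argument (so that $\sigma$ is intrinsically determined by the induced map on the Hitchin base), obtain surjectivity by producing from any $\sigma\in\text{Aut}(X)$ an automorphism via a square root of $L\otimes(\sigma^*L)^\vee$, and identify $\ker\rho$ with the image of $J(X)_2$ using Theorem \ref{thm:autom}.

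The one place where your route genuinely diverges is the injectivity of $\iota$. You propose a generic vanishing $H^0(\End(E)\otimes M)=0$ via semicontinuity, whereas the paper argues effectiveness of the $J(X)_2$-action more concretely: it reduces to the rank-$2$ moduli $M_{\Sp}(2,L)$ through the direct-sum embedding $M_{\Sp}(2,L)\times M_{\Sp}(2n-2,L)\hookrightarrow \overline{M}_{\Sp}(L)$, and then splits into two cases according to the parity of $\deg L$. For even degree it exhibits a decomposable rank-$2$ bundle $M\oplus(L\otimes M^*)$ moved by every nontrivial $2$-torsion element; for odd degree it observes that the fixed locus of a nontrivial $\xi\in J(X)_2$ on $M_{\Sp}(2,L)$ is (the image of) a Prym variety of dimension $g-1<3g-3=\dim M_{\Sp}(2,L)$. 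Your semicontinuity approach is conceptually cleaner, but note that it still requires producing one stable symplectic $E$ with $\Hom(E,E\otimes M)=0$ for each nontrivial $M$; the most direct way to do that is essentially the paper's explicit construction or dimension count, so the two arguments converge in practice.
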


\begin{proof}
{}From the proof of Theorem \ref{thm:autom}, if follows that we have
a surjective homomorphism
$$
\rho\, :\, \text{Aut}(M_{\rm Sp}(L)) \too \text{Aut}(X)\, .
$$
The homomorphism sends $\Phi$ to $\sigma$ (see the
proof of Theorem \ref{thm:autom}).
The kernel of $\rho$ is a quotient of $J(X)_2$. Therefore,
to prove the proposition it suffices to show that the action
of $J(X)_2$ on $M_{\rm Sp}(L)$ is effective.

Let $M_{\rm Sp}(2,L)$ (respectively, $M_{\rm Sp}(2n-2,L)$) be the
moduli space of symplectic bundles of rank $2$ (respectively,
$2n-2$) such that the symplectic form takes values in $L$.
There is a natural embedding 
$$
M_{\rm Sp}(2,L)\times M_{\rm Sp}(2n-2,L)\, \longrightarrow\,
\overline{M}_{\rm Sp}(L)
$$
(where $\overline{M}_{\rm Sp}(L)$ is the moduli space of semistable symplectic bundles),
defined by $((E_1,\varphi_1)\, , (E_2,\varphi_2))\longmapsto
(E_1\oplus E_2,\varphi_1\oplus \varphi_2)$. To prove that the
action
of $J(X)_2$ on $M_{\rm Sp}(L)$ is effective it is enough to show
that the action of $J(X)_2$ on $M_{\rm Sp}(2,L)$ is effective.

First assume that $\deg L\,=\, 2\delta$, where
$\delta$ is an integer. Then for a
general line bundle $M\, \in\, J^\delta(X)$, the symplectic
bundle $M\oplus (L\otimes M^*)\, \in\, M_{\rm Sp}(2,L)$
is moved by the action of every nontrivial element of
$J(X)_2$. Therefore, the action of $J(X)_2$ on $M_{\rm Sp}(2,L)$ is
effective.

Now assume that $\deg L\,=\, 2\delta+1$.
Fix a nontrivial line bundle $\xi\in J(X)_2$. Take
a pair $(E,\theta)$, where $E$ is a stable vector bundle of
rank two with $\bigwedge^2 E = L$, and
$$
\theta \, :\, E\, \longrightarrow\, E\otimes\xi
$$
is an isomorphism. Therefore, $E$ is a fixed point for the action
of $\xi$ on $M_{\rm Sp}(2,L)$.

The line bundle $\xi$ defines a nontrivial \'etale covering
$$
f\,:\, Y\, \longrightarrow\, X
$$
of degree two, and $E$ produces a line bundle $\eta\,\longrightarrow\,
Y$ such that $f_*\eta = E$ (see \cite{BNR}, \cite{Hi}). Therefore,
$\eta$ lies in the Prym subvariety of $J^{2\delta+1}(Y)$ associated
to the covering $f$. The dimension of the Prym variety is
$g-1$. On the other hand, the dimension of $M_{\rm Sp}(2,L)$
is $3g-3$. Since $3g-3 > g-1$, we conclude that the action of
$\xi$ on $M_{\rm Sp}(2,L)$ is effective. This completes the proof
of the proposition.
\end{proof}

\begin{lemma}\label{lem-s-a}
Let $\overline{M}_{\Sp}(L)$ be the moduli space of semistable
symplectic bundles. The automorphism group of $\overline{M}_{\Sp}(L)$ is
identified with ${\rm Aut}(M_{\rm Sp}(L))$.
\end{lemma}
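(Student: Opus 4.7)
The plan is to identify $\text{Aut}(\overline{M}_{\Sp}(L))$ with $\text{Aut}(M_{\Sp}(L))$ by exhibiting a restriction homomorphism one way and an extension construction the other way, then checking that they are mutually inverse.

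First I would construct the restriction map. The stable locus $M_{\Sp}(L)\subset \overline{M}_{\Sp}(L)$ is intrinsically characterized inside the coarse moduli space: it is the open subset consisting of smooth points whose complement has codimension $\geq 2$ (for instance, the strictly semistable locus, cut out by S-equivalence identifications, is a proper closed subvariety of codimension at least two once $g\geq 3$, and no automorphism of $\overline{M}_{\Sp}(L)$ can move a smooth point to a singular one). Hence any automorphism of $\overline{M}_{\Sp}(L)$ preserves $M_{\Sp}(L)$, and restriction gives a well-defined group homomorphism
\[
r\,:\,\text{Aut}(\overline{M}_{\Sp}(L))\,\longrightarrow\,\text{Aut}(M_{\Sp}(L))\, .
\]
Injectivity of $r$ is immediate since $M_{\Sp}(L)$ is open and dense in the separated variety $\overline{M}_{\Sp}(L)$, so two automorphisms agreeing on the former must coincide.

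Next I would establish surjectivity using Theorem \ref{thm:autom}. That theorem identifies every element $\Phi\in\text{Aut}(M_{\Sp}(L))$ as the composition of (i) the automorphism $(E,\varphi)\longmapsto(E\otimes M,\varphi\otimes\beta)$ for some $M\in J(X)_2$ and a fixed isomorphism $\beta:M^{\otimes 2}\cong\SO_X$, and (ii) the pullback $(E,\varphi)\longmapsto(M'\otimes\sigma^* E,\beta'\otimes\sigma^*\varphi)$ for some $\sigma\in\text{Aut}(X)$ and suitable line bundle $M'$ with $(M')^{\otimes 2}\cong L\otimes(\sigma^* L)^\vee$. Both constructions are functorial: they preserve rank, preserve the values-in-$L$ symplectic form up to the required twist, preserve (semi)stability of symplectic bundles, and are compatible with S-equivalence of polystable representatives. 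Consequently each extends to an automorphism $\widetilde\Phi$ of the entire moduli space $\overline{M}_{\Sp}(L)$, and by construction $r(\widetilde\Phi)=\Phi$.

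Combining injectivity and surjectivity, $r$ is an isomorphism, which is exactly the identification claimed. The main obstacle is the very first step, that is, verifying intrinsically that $M_{\Sp}(L)$ is preserved by automorphisms of $\overline{M}_{\Sp}(L)$; everything else is a straightforward consequence of Theorem \ref{thm:autom} and of the functoriality of the twisting and pullback operations. Once the stable locus is recognized (via the codimension-$\geq 2$ singular locus, or equivalently as the smooth locus of the coarse moduli space under our genus hypothesis $g\geq 3$), the lemma follows.
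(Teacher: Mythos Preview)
Your argument follows essentially the same two-step strategy as the paper: show that every automorphism of $M_{\Sp}(L)$, being of the explicit form given by Theorem~\ref{thm:autom}, extends functorially to $\overline{M}_{\Sp}(L)$; and show that every automorphism of $\overline{M}_{\Sp}(L)$ restricts to $M_{\Sp}(L)$ via the smooth locus.

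There is one point where the paper is more careful than you are. You assert that $M_{\Sp}(L)$ \emph{equals} the smooth locus of $\overline{M}_{\Sp}(L)$, and use this to define the restriction map directly. The paper instead only claims that the smooth locus of $M_{\Sp}(L)$ coincides with the smooth locus of $\overline{M}_{\Sp}(L)$, thereby allowing for the possibility that $M_{\Sp}(L)$ itself has singular points (stable symplectic bundles with extra automorphisms). An automorphism of $\overline{M}_{\Sp}(L)$ then a priori only restricts to the smooth locus of $M_{\Sp}(L)$; the paper closes the gap by invoking the proof of Theorem~\ref{thm:autom}, which in fact classifies automorphisms of that smooth open set and shows they extend to all of $M_{\Sp}(L)$. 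Your version is correct if $M_{\Sp}(L)$ is smooth, but you have not justified that; the paper's phrasing sidesteps the issue entirely.
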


\begin{proof}
The automorphisms of $M_{\rm Sp}(L)$ given by $J(X)_2$ clearly
extend to automorphisms of $\overline{M}_{\Sp}(L)$.
More generally, for any automorphism $\sigma:X\too X$, and
any line bundle $M$ such that $M^{\otimes 2} \cong L\otimes
(\sigma^*L)^\vee$, the automorphism of $M_{\rm Sp}(L)$
defined by
$$
(E, \varphi) \longmapsto (M\otimes \sigma^*E, \beta
\otimes\sigma^*\varphi)\, ,
$$
where $\beta$ is an isomorphism of $M^{\otimes 2}$ with
$\SO_X$, extends to an automorphism of $\overline{M}_{\Sp}(L)$.

On the other hand, from the proof of Theorem \ref{thm:autom}
it follows that any automorphism of the smooth locus of
$M_{\rm Sp}(L)$ extends to an automorphism of
$M_{\rm Sp}(L)$. But the smooth locus of $M_{\rm Sp}(L)$
coincides with the smooth locus of $\overline{M}_{\Sp}(L)$.
Any automorphism of a variety preserves the smooth
locus. Therefore, ${\rm Aut}(\overline{M}_{\Sp}(L))$
is identified with ${\rm Aut}(M_{\rm Sp}(L))$.
\end{proof}

\medskip
\noindent
\textbf{Acknowledgements.}\, This research
was supported by the grant MTM2007-63582 of
the Spanish Ministerio de Ciencia e Innovaci\'on. The second
and third author
thank the hospitality of Tata Institute of Fundamental Research
during the visit where part of this work was done. The first author
thanks McGill University for hospitality while a part of the work
was carried out.

\end{document}